\newcommand{\vc}[1]{\ensuremath{\vcenter{\hbox{#1}}}}
\tikzset{flag_pic/.style={scale=1}}  
\tikzset{unlabeled_vertex/.style={inner sep=1.7pt, outer sep=0pt, circle, fill}} 
\tikzset{labeled_vertex/.style={inner sep=2.2pt, outer sep=0pt, rectangle, fill=gray, draw=black}} 
\tikzset{edge_color0/.style={color=black,line width=1.2pt,opacity=0.5,dashed}} 
\tikzset{edge_color1/.style={color=BrickRed,  line width=1.2pt,opacity=1}} 
\tikzset{edge_color2/.style={color=NavyBlue, line width=1.2pt,opacity=1}} 
\tikzset{edge_color3/.style={color=green!80!black,line width=1.2pt}} 
\tikzset{edge_color4/.style={color=orange, line width=1.2pt}} 
\tikzset{edge_color5/.style={color=red,  line width=1.2pt,dotted}} 
\tikzset{edge_color6/.style={color=blue, line width=1.2pt,dotted}} 
\tikzset{edge_color7/.style={color=green, line width=1.2pt,dotted}} 
\tikzset{edge_color8/.style={color=gray, line width=1.2pt}} 
\tikzset{edge_color9/.style={color=gray, dotted, line width=1.2pt}} 
\tikzset{edge_color10/.style={color=gray, dashed, line width=1.2pt}} 
\tikzset{edge_color11/.style={color=pink, dashed, line width=1.2pt}} 
\tikzset{edge_colorroot/.style={color=red, line width=1.7pt}} 
\tikzset{edge_thin/.style={color=black}} 
\tikzset{edge_hidden/.style={color=black,dotted,opacity=0}} 
\tikzset{vertex_color0/.style={inner sep=1.7pt, outer sep=0pt, draw, circle, fill=black}} 
\tikzset{vertex_color1/.style={inner sep=1.7pt, outer sep=0pt, draw, circle, fill=red}} 
\tikzset{vertex_color2/.style={inner sep=1.7pt, outer sep=0pt, draw, circle, fill=blue}} 
\tikzset{vertex_color3/.style={inner sep=1.7pt, outer sep=0pt, draw, circle, fill=green!80!black}} 
\tikzset{vertex_color4/.style={inner sep=1.7pt, outer sep=0pt, draw, circle, fill=pink}} 
\tikzset{vertex_color5/.style={inner sep=1.7pt, outer sep=0pt, draw, circle, fill=pink,label=below:{$5$}}} 
\tikzset{vertex_color6/.style={inner sep=1.7pt, outer sep=0pt, draw, circle, fill=pink,label=below:{$6$}}} 
\tikzset{vertex_color7/.style={inner sep=1.7pt, outer sep=0pt, draw, circle, fill=pink,label=below:{$7$}}} 
\tikzset{vertex_color8/.style={inner sep=1.7pt, outer sep=0pt, draw, circle, fill=pink,label=below:{$8$}}} 
\tikzset{vertex_color9/.style={inner sep=1.7pt, outer sep=0pt, draw, circle, fill=pink,label=below:{$9$}}} 
\tikzset{vertex_color10/.style={inner sep=1.7pt, outer sep=0pt, draw, circle, fill=pink,label=below:{$10$}}} 
\tikzset{vertex_color11/.style={inner sep=1.7pt, outer sep=0pt, draw, circle, fill=pink,label=below:{$11$}}} 
\tikzset{vertex_color12/.style={inner sep=1.7pt, outer sep=0pt, draw, circle, fill=pink,label=below:{$12$}}} 
\tikzset{vertex_color13/.style={inner sep=1.7pt, outer sep=0pt, draw, circle, fill=pink,label=below:{$13$}}} 
\tikzset{vertex_color14/.style={inner sep=1.7pt, outer sep=0pt, draw, circle, fill=pink,label=below:{$14$}}}
\tikzset{labeled_vertex_color0/.style={inner sep=2.2pt, outer sep=0pt, draw, rectangle, fill=black}} 
\tikzset{labeled_vertex_color1/.style={inner sep=2.2pt, outer sep=0pt, draw, rectangle, fill=red}} 
\tikzset{labeled_vertex_color2/.style={inner sep=2.2pt, outer sep=0pt, draw, rectangle, fill=blue}} 
\tikzset{labeled_vertex_color3/.style={inner sep=2.2pt, outer sep=0pt, draw, rectangle, fill=green}} 
\tikzset{labeled_vertex_color4/.style={inner sep=2.2pt, outer sep=0pt, draw, rectangle, fill=pink}} 
\tikzset{text_color0/.style={color=black}} 
\tikzset{text_color1/.style={color=red}} 
\tikzset{text_color2/.style={color=blue}} 
\tikzset{text_color3/.style={color=green!70!black}} 
\tikzset{text_color4/.style={color=orange}} 
\tikzset{text_color5/.style={color=gray}} 
\def\outercycle#1#2{ 
\pgfmathtruncatemacro{\plusone}{#1+1} 
\pgfmathtruncatemacro{\zeroshift}{270 - (#2-1)*360/#1/2 } 
\draw  \foreach \x in {0,1,...,#1}{(\zeroshift+\x*360/#1:1) coordinate(x\x)};}
\def\labelvertex#1{\pgfmathtruncatemacro{\vertexlabel}{#1+1 } \draw (x#1) node{\color{yellow}\tiny\vertexlabel}; }
\tikzset{vertex_u/.style={unlabeled_vertex}} 
\tikzset{vertex_l/.style={labeled_vertex}}  
\tikzset{vertex_su/.style={inner sep=1.2pt, outer sep=0pt, circle, fill}}
\newcommand{\Fuu}[1]{
\,\vc{\begin{tikzpicture}[scale=0.3]\outercycle{2}{1}
\draw[edge_color#1] (x0)--(x1);  
\draw (x0) node[unlabeled_vertex]{};\draw (x1) node[unlabeled_vertex]{};
\end{tikzpicture}}
\,
}
\newcommand{\Fuuu}[3]{
\vc{\begin{tikzpicture}[scale=0.4]\outercycle{3}{1}
\draw[edge_color#1] (x0)--(x1);\draw[edge_color#2] (x0)--(x2);  \draw[edge_color#3] (x1)--(x2);    
\draw (x0) node[unlabeled_vertex]{};\draw (x1) node[unlabeled_vertex]{};\draw (x2) node[unlabeled_vertex]{};
\end{tikzpicture}}}
\newcommand{\FfourEdges}[6]{
\draw[edge_color#1] (x0)--(x1);\draw[edge_color#2] (x0)--(x2);\draw[edge_color#3] (x0)--(x3);  \draw[edge_color#4] (x1)--(x2);\draw[edge_color#5] (x1)--(x3);  \draw[edge_color#6] (x2)--(x3);
}
\newcommand{\Ffour}[5]{
\vc{\begin{tikzpicture}[scale=0.4]\outercycle{4}{2}
\FfourEdges#5
\draw (x0) node[vertex_#1]{};\draw (x1) node[vertex_#2]{};\draw (x2) node[vertex_#3]{};\draw (x3) node[vertex_#4]{};
\ifthenelse{\equal{#1}{l}}{\labelvertex{0}}{}%
\ifthenelse{\equal{#2}{l}}{\labelvertex{1}}{}%
\ifthenelse{\equal{#3}{l}}{\labelvertex{2}}{}%
\ifthenelse{\equal{#4}{l}}{\labelvertex{3}}{}%
\end{tikzpicture}}
}
\newcommand{\Fuuuu}[6]{\Ffour{u}{u}{u}{u}{#1#2#3#4#5#6}}
\tikzset{
    gon1/.style={name=tmp,regular polygon,regular polygon sides=#1,minimum size=10pt,inner sep=0pt},
    gon2/.style={name=tmp,regular polygon,regular polygon sides=#1,minimum size=10pt,inner sep=(1/(2*(tan(pi/#1)))*0.15},
    polygon side/.style args={#1--#2}{
        insert path={(tmp.corner #1)-- (tmp.corner #2)}
    }
}
\newcommand{\FlagGraph}[3][thick]{
\ifnum#2=2%
    \tikz[baseline=0pt]{
        \node[#1, circle,inner sep=1pt,fill] (tmp1) at (0,0){};
        \node[#1,circle,inner sep=1pt,fill] (tmp2) at (0,8pt){};
        \ifx#3\empty%
        \else
            \draw[#1] (0,0) -- (0,8pt);
        \fi
    }
\else%
    \tikz[baseline=-2pt]{
        \node[#1,gon1=#2]{};
        \foreach \X in {1,...,#2}{
            \fill[#1] (tmp.corner \X) circle (1.5pt);
        }
        \draw[#1,polygon side/.list={#3}];
    }
\fi
}
\newcommand{\FlagGraphNumb}[4][very thick]{
\ifnum#2=2%
    \tikz[baseline=3pt]{
        \ifx#3\empty%
        \else
            \draw[#1] (0,0) -- (0,16pt);
        \fi
        \ifnum#4>0
          \node[thick, circle,inner sep=1pt, fill=white, draw] (tmp1) at (0,16pt){\footnotesize 1};
        \else
          \node[#1, circle,inner sep=1.5pt, fill] (tmp1) at (0,16pt){};
        \fi
        \ifnum#4>1
          \node[thick, circle,inner sep=1pt, fill=white, draw] (tmp2) at (0,0){\footnotesize 2};
        \else 
          \node[#1,circle,inner sep=1.5pt, fill] (tmp2) at (0,0){};
        \fi
    }
\else%
    \tikz[baseline=-2pt]{
        \node[#1,gon2=#2]{};
        \draw[#1,polygon side/.list={#3}];
        \foreach \X in {1,...,#2}{
            \pgfmathtruncatemacro\result{\X-1}
            \ifnum#4>\result
              \node [thick, circle,inner sep=1pt, fill=white, draw] at (tmp.corner \X){\footnotesize $\X$};
            \else 
              \node [#1,circle,inner sep=1.5pt, fill] at (tmp.corner \X){};
            \fi
        }

    }
\fi
}
\newcommand{\FlagGraphLabelTwo}[4][very thick]{
  \tikz[baseline=3pt]{
    \ifx#2\empty%
    \else
        \draw[#1] (0,0) -- (0,16pt);
    \fi
    \ifx#30
      \node[#1, circle,inner sep=1.5pt, fill] (tmp1) at (0,16pt){};
    \else
      \node[thick, circle,inner sep=1pt, fill=white, draw, minimum size = 3pt] (tmp1) at (0,16pt){\footnotesize $#3$};
    \fi  
    \ifx#40
      \node[#1,circle,inner sep=1.5pt, fill] (tmp2) at (0,0){};
    \else
      \node[thick, circle,inner sep=1pt, fill=white, draw, minimum size = 3pt] (tmp2) at (0,0){\footnotesize $#4$}; 
    \fi    
  }
}
\newcommand{\FlagGraphLabelThree}[5][very thick]{
  \tikz[baseline=-2pt]{
        \node[#1,gon2=3]{};
        \draw[#1,polygon side/.list={#2}];
        
        \ifx#30
          \node [#1,circle,inner sep=1.5pt, fill] at (tmp.corner 1){};
        \else 
          \node [thick, circle,inner sep=1pt, fill=white, draw, minimum size = 3pt] at (tmp.corner 1){\footnotesize $#3$};
        \fi
        \ifx#40
          \node [#1,circle,inner sep=1.5pt, fill] at (tmp.corner 2){};
        \else 
          \node [thick, circle,inner sep=1pt, fill=white, draw, minimum size = 3pt] at (tmp.corner 2){\footnotesize $#4$};
        \fi
        \ifx#50
          \node [#1,circle,inner sep=1.5pt, fill] at (tmp.corner 3){};
        \else 
          \node [thick, circle,inner sep=1pt, fill=white, draw, minimum size = 3pt] at (tmp.corner 3){\footnotesize $#5$};
        \fi
    }
}
\newcommand{\FlagGraphLabelFour}[6][very thick]{
  \tikz[baseline=-2pt]{
        \node[#1,gon2=4]{};
        \draw[#1,polygon side/.list={#2}];
        
        \ifx#30
          \node [#1,circle,inner sep=1.5pt, fill] at (tmp.corner 1){};
        \else 
          \node [thick, circle,inner sep=1pt, fill=white, draw, minimum size = 3pt] at (tmp.corner 1){\footnotesize $#3$};
        \fi
        \ifx#40
          \node [#1,circle,inner sep=1.5pt, fill] at (tmp.corner 2){};
        \else 
          \node [thick, circle,inner sep=1pt, fill=white, draw, minimum size = 3pt] at (tmp.corner 2){\footnotesize $#4$};
        \fi
        \ifx#50
          \node [#1,circle,inner sep=1.5pt, fill] at (tmp.corner 3){};
        \else 
          \node [thick, circle,inner sep=1pt, fill=white, draw, minimum size = 3pt] at (tmp.corner 3){\footnotesize $#5$};
        \fi
        \ifx#60
          \node [#1,circle,inner sep=1.5pt, fill] at (tmp.corner 4){};
        \else 
          \node [thick, circle,inner sep=1pt, fill=white, draw, minimum size = 3pt] at (tmp.corner 4){\footnotesize $#6$};
        \fi
    }
}
\newcommand{\FlagGraphLabelFive}[7][very thick]{
  \tikz[baseline=-2pt]{
        \node[#1,gon2=5]{};
        \draw[#1,polygon side/.list={#2}];
        
        \ifx#30
          \node [#1,circle,inner sep=1.5pt, fill] at (tmp.corner 1){};
        \else 
          \node [thick, circle,inner sep=1pt, fill=white, draw, minimum size = 3pt] at (tmp.corner 1){\footnotesize $#3$};
        \fi
        \ifx#40
          \node [#1,circle,inner sep=1.5pt, fill] at (tmp.corner 2){};
        \else 
          \node [thick, circle,inner sep=1pt, fill=white, draw, minimum size = 3pt] at (tmp.corner 2){\footnotesize $#4$};
        \fi
        \ifx#50
          \node [#1,circle,inner sep=1.5pt, fill] at (tmp.corner 3){};
        \else 
          \node [thick, circle,inner sep=1pt, fill=white, draw, minimum size = 3pt] at (tmp.corner 3){\footnotesize $#5$};
        \fi
        \ifx#60
          \node [#1,circle,inner sep=1.5pt, fill] at (tmp.corner 4){};
        \else 
          \node [thick, circle,inner sep=1pt, fill=white, draw, minimum size = 3pt] at (tmp.corner 4){\footnotesize $#6$};
        \fi
        \ifx#70
          \node [#1,circle,inner sep=1.5pt, fill] at (tmp.corner 5){};
        \else 
          \node [thick, circle,inner sep=1pt, fill=white, draw, minimum size = 3pt] at (tmp.corner 5){\footnotesize $#7$};
        \fi
    }
}
\newcommand{\FlagGraphLabelSix}[8][very thick]{
  \tikz[baseline=-2pt]{
        \node[#1,gon2=6]{};
        \draw[#1,polygon side/.list={#2}];
        
        \ifx#30
          \node [#1,circle,inner sep=1.5pt, fill] at (tmp.corner 1){};
        \else 
          \node [thick, circle,inner sep=1pt, fill=white, draw, minimum size = 3pt] at (tmp.corner 1){\footnotesize $#3$};
        \fi
        \ifx#40
          \node [#1,circle,inner sep=1.5pt, fill] at (tmp.corner 2){};
        \else 
          \node [thick, circle,inner sep=1pt, fill=white, draw, minimum size = 3pt] at (tmp.corner 2){\footnotesize $#4$};
        \fi
        \ifx#50
          \node [#1,circle,inner sep=1.5pt, fill] at (tmp.corner 3){};
        \else 
          \node [thick, circle,inner sep=1pt, fill=white, draw, minimum size = 3pt] at (tmp.corner 3){\footnotesize $#5$};
        \fi
        \ifx#60
          \node [#1,circle,inner sep=1.5pt, fill] at (tmp.corner 4){};
        \else 
          \node [thick, circle,inner sep=1pt, fill=white, draw, minimum size = 3pt] at (tmp.corner 4){\footnotesize $#6$};
        \fi
        \ifx#70
          \node [#1,circle,inner sep=1.5pt, fill] at (tmp.corner 5){};
        \else 
          \node [thick, circle,inner sep=1pt, fill=white, draw, minimum size = 3pt] at (tmp.corner 5){\footnotesize $#7$};
        \fi
        \ifx#80
          \node [#1,circle,inner sep=1.5pt, fill] at (tmp.corner 6){};
        \else 
          \node [thick, circle,inner sep=1pt, fill=white, draw, minimum size = 3pt] at (tmp.corner 5){\footnotesize $#8$};
        \fi
    }
}
\newcommand{\FlagGraphLabelSeven}[9][very thick]{
  \tikz[baseline=-2pt]{
        \node[#1,gon2=7]{};
        \draw[#1,polygon side/.list={#2}];
        
        \ifx#30
          \node [#1,circle,inner sep=1.5pt, fill] at (tmp.corner 1){};
        \else 
          \node [thick, circle,inner sep=1pt, fill=white, draw, minimum size = 3pt] at (tmp.corner 1){\footnotesize $#3$};
        \fi
        \ifx#40
          \node [#1,circle,inner sep=1.5pt, fill] at (tmp.corner 2){};
        \else 
          \node [thick, circle,inner sep=1pt, fill=white, draw, minimum size = 3pt] at (tmp.corner 2){\footnotesize $#4$};
        \fi
        \ifx#50
          \node [#1,circle,inner sep=1.5pt, fill] at (tmp.corner 3){};
        \else 
          \node [thick, circle,inner sep=1pt, fill=white, draw, minimum size = 3pt] at (tmp.corner 3){\footnotesize $#5$};
        \fi
        \ifx#60
          \node [#1,circle,inner sep=1.5pt, fill] at (tmp.corner 4){};
        \else 
          \node [thick, circle,inner sep=1pt, fill=white, draw, minimum size = 3pt] at (tmp.corner 4){\footnotesize $#6$};
        \fi
        \ifx#70
          \node [#1,circle,inner sep=1.5pt, fill] at (tmp.corner 5){};
        \else 
          \node [thick, circle,inner sep=1pt, fill=white, draw, minimum size = 3pt] at (tmp.corner 5){\footnotesize $#7$};
        \fi
        \ifx#80
          \node [#1,circle,inner sep=1.5pt, fill] at (tmp.corner 6){};
        \else 
          \node [thick, circle,inner sep=1pt, fill=white, draw, minimum size = 3pt] at (tmp.corner 5){\footnotesize $#8$};
        \fi
        \ifx#90
          \node [#1,circle,inner sep=1.5pt, fill] at (tmp.corner 7){};
        \else 
          \node [thick, circle,inner sep=1pt, fill=white, draw, minimum size = 3pt] at (tmp.corner 5){\footnotesize $#9$};
        \fi
    }
}
\newcommand{\Orderedtwo}[1]{
\vc{\begin{tikzpicture}[xscale=0.36,yscale=0.75]
\draw  \foreach \x in {0,1} {(\x, 0)coordinate(x\x)  node[vertex_su]{}};
\ifx#11
\draw[thick] (x0)to[bend left=50](x1);
\fi
\end{tikzpicture}}
}
\newcommand{\Orderedthree}[3]{
\vc{\begin{tikzpicture}[xscale=0.36,yscale=0.75]
\draw  \foreach \x in {0,1,...,2} {(\x, 0)coordinate(x\x)  node[vertex_su]{}};
\ifx#11
\draw[thick] (x0)to[bend left=50](x1);
\fi
\ifx#21
\draw[thick] (x0)to[bend left=50](x2);
\fi
\ifx#31
\draw[thick] (x1)to[bend left=50](x2);
\fi
\end{tikzpicture}}
}
\newcommand{\Orderedfour}[6]{
\vc{\begin{tikzpicture}[xscale=0.36,yscale=0.75]
\draw  \foreach \x in {0,1,...,3} {(\x, 0)coordinate(x\x) node[vertex_su]{}};
\ifx#11
\draw[thick] (x0)to[bend left=50](x1);
\fi
\ifx#21
\draw[thick] (x0)to[bend left=50](x2);
\fi
\ifx#31
\draw[thick] (x0)to[bend left=50](x3);
\fi
\ifx#41
\draw[thick] (x1)to[bend left=50](x2);
\fi
\ifx#51
\draw[thick] (x1)to[bend left=50](x3);
\fi
\ifx#61
\draw[thick] (x2)to[bend left=50](x3);
\fi
\end{tikzpicture}}
}
\def\e{1.2}
\tikzset{
insep/.style={inner sep=2.5pt, outer sep=0pt, circle, fill}, 
vtx/.style={inner sep=1.5pt, outer sep=0pt, circle, fill}, 
redthinline/.style={BrickRed,line width=2pt,opacity=1},
bluethinline/.style={NavyBlue,line width=2pt,opacity=1},
redline/.style={BrickRed,line width=0.3cm,opacity=0.5},
blueline/.style={NavyBlue,line width=0.3cm,opacity=0.5},
grayline/.style={gray,line width=2pt},
blackline/.style={gray,line width=2pt},
turanblock/.style={fill=NavyBlue!30},
turan/.style={inner sep=0.12cm, outer sep=0pt, circle, fill=white!30,draw}, 
turangreen/.style={inner sep=0.12cm, outer sep=0pt, circle, fill=green,draw}, 
}
\def\myclip{
\clip (-2.1,-0.5) rectangle (0.85,2.3)  ;
}
\newcommand{\Erdos}{Erd\H{o}s}
\newcommand{\ER}[1]{\mathrm{ER}(#1)}
\newcommand{\CR}[1]{\mathrm{CR}(#1)}
\newcommand{\Ram}[1]{\mathrm{R}(#1)}
\newcommand{\ordRam}[1]{\overrightarrow{\mathrm{R}}(#1)}
\newcommand{\Lidicky}{Lidick\'y}
\newcommand{\Cmono}{\mathcal{C}^{\mathrm{mono}}}
\newcommand{\G}{\mathcal{G}}
\newcommand{\Hcal}{\mathcal{H}}
\newcommand{\Kb}[2]{K_{#1,#2}}
\newcommand{\Kbmin}[2]{K_{#1,#2}^-}
\newcommand{\C}{\mathcal{C}}
\newcommand{\Pmono}[1]{\mathcal{P}^{\mathrm{mono}}_{#1}}
\newcommand{\Palt}[1]{\mathcal{P}^{\mathrm{alt}}_{#1}}
\newcommand{\cmax}{c_{\max}}
\newcommand{\nint}{[n]}
\newcommand{\EKn}{\binom{\nint}{2}}
\newcommand{\edge}[2]{#1#2}
\renewcommand*{\forall}{\text{ for all }}
\newcommand{\forallpwd}{\text{ for all pairwise-distinct }}
\newcommand{\foralld}{\text{ for all distinct }}
\newcommand{\sharpbound}[1]{{\color{blue}\textbf{#1}}}
\newcommand{\sharpboundV}[1]{{\color{blue}\textbf{#1}}} 
\newcommand{\boundV}[1]{{\color{black}{#1}}}
\newcommand{\sharpboundLBUB}[3]{\ensuremath{{\color{blue}\textbf{#1}}#2#3}}
\newcommand{\ramseyboundLBUB}[5]{
\substack{\noindent\begin{minipage}[c][0.45cm][b]{1.25cm}\centering\parbox[c][0.45cm][b]{1.25cm}\centering\scalebox{0.5}[1.0]{#1}\end{minipage}\\\begin{minipage}[c][0.5cm][t]{1.25cm}\centering\parbox[c][0.3cm][c]{1.25cm}{\centering
\ifnum#2=#3
$\sharpboundLBUB{#2}{#4}{#5}$
\else
\ifnum#3=\noexpand\infty
\small$[#2#4, #3)$
\else
\small$[#2#4, #3#5]$
\fi
\fi
}\end{minipage}}
}
\newcommand{\ramseybound}[3]{

\substack{\noindent\begin{minipage}[c][0.45cm][b]{1.25cm}\centering\parbox[c][0.45cm][b]{1.25cm}\centering\scalebox{1.0}[1.0]{#1}\end{minipage}\\\begin{minipage}[c][0.5cm][t]{1.25cm}\centering\parbox[c][0.3cm][c]{1.25cm}{\centering
\ifnum#2=#3
$\sharpbound{#2}$
\else
\ifnum#3=\noexpand\infty
\small$[#2, #3)$
\else
\small$[#2, #3]$
\fi
\fi
}\end{minipage}}
}
\DeclarePairedDelimiter{\abs}{\lvert}{\rvert}
\DeclarePairedDelimiter{\set}\{\}
\newtheorem{theorem}{Theorem}[section]
\newtheorem{definition}{Definition}[section]
\newtheorem{example}{Example}[section]
\newtheorem{claim}{Claim}[section]
\title{Lower and Upper Bounds for Small Canonical and Ordered Ramsey Numbers}
\author{
Daniel Brosch\thanks{Alpen-Adria-Universität Klagenfurt, Universitätsstraße 65–67, 9020 Klagenfurt, Austria, {\tt daniel.brosch@aau.at}.
}
\and
Bernard Lidick\'y\thanks{Department of Mathematics, Iowa State University, Ames, IA, E-mail: {\tt lidicky@iastate.edu}. Research of this author is supported in part by NSF FRG DMS-2152490 and Scott Hanna Professorship.}
\and
Sydney Miyasaki\thanks{Department of Mathematics, Iowa State University, Ames, IA, E-mail: {\tt miyasaki@iastate.edu}. Research of this author is supported in part by NSF FRG DMS-2152490.}
\and
Diane Puges\thanks{Alpen-Adria-Universität Klagenfurt, Universitätsstraße 65–67, 9020 Klagenfurt, Austria, {\tt diane.puges@aau.at}.
This research was funded in whole or in part by the Austrian Science Fund (FWF) [10.55776/DOC78]. For open access purposes, the author has applied a CC BY public copyright license to any author-accepted manuscript version arising from this submission.
}}
\date{\today}
\pgfplotsset{compat=1.18} 
\begin{document}

\newcommand{\LBSA}{\ensuremath{_{\text{\scriptsize\faFire}}}}
\newcommand{\UBFA}{\ensuremath{^{\text{\scriptsize\faFlag}}}}

\maketitle

\begin{abstract}
Ramsey numbers form a classical topic in combinatorics and have been extended from graphs to a range of other combinatorial settings. In this paper, we investigate three such extensions.

We first consider \emph{ordered Ramsey numbers}, which can be viewed as being as old as classical Ramsey numbers, since the Erd\H{o}s--Szekeres lemma already fits within this framework. Here, we ask for a monochromatic copy of a linearly ordered graph~$G$ in every $2$-edge-coloring of a linearly ordered complete graph~$K_n$. The smallest such~$n$ is denoted by~$\vec{R}(G)$.

Next, we study \emph{canonical Ramsey numbers}. A \emph{canonical coloring} of a linearly ordered graph~$G$ is an edge-coloring in which~$G$ is monochromatic, rainbow, or min/max-lexicographic. In the latter case, each pair of edges receives the same color if and only if they share the same first (respectively, second) vertex. Erd\H{o}s and Rado showed that for every~$p$ there exists~$n$ such that every edge-coloring of a linearly ordered~$K_n$ contains a canonical copy of~$K_p$; the smallest such~$n$ is denoted by~$ER(G)$.

Finally, we examine \emph{unordered canonical Ramsey numbers}, introduced by Richer. An edge-coloring of~$G$ is \emph{orderable} if there exists a linear ordering of its vertices such that the color of each edge is determined by its first vertex. Unlike lexicographic colorings, this notion also includes monochromatic colorings. Richer proved that for all~$s$ and~$t$, there exists~$n$ such that every edge-coloring of~$K_n$ contains an orderable copy of~$K_s$ or a rainbow~$K_t$. The smallest such~$n$ is denoted by~$CR(s,t)$.

In all three settings, we focus on determining the corresponding Ramsey numbers for small graphs~$G$. We use tabu search and integer programming to obtain lower bounds, and flag algebras or integer programming to establish upper bounds. Among other results, we determine $\vec{R}(G)$ for all graphs~$G$ on up to four vertices except~$K_4^-$, $ER(P_4)$ for all orderings of~$P_4$, and the exact values $CR(6,3)=26$ and $CR(3,5)=13$.

\end{abstract}
 \textit{Keywords: Ramsey numbers, canonical Ramsey, ordered Ramsey, tabu search, integer programming, flag algebras}

\listoffixmes


\section{Introduction}

Ramsey's theorem states that for any two integers $s$ and $t$, there exists a minimal integer $n$ such that every red-blue-coloring of the complete graph of size $n$ contains either a red complete graph of size $s$ or a blue complete graph of size $t$. This integer is called the \emph{Ramsey number} $\Ram{s,t}$.  This result, proved by Frank Ramsey in $1930$, gave rise to an entire area of combinatorics called Ramsey theory.
Ramsey theory is now a very active and prolific area of mathematics which intersects with areas such as graph theory, geometry, number theory, topology, measure theory, among many others.
In this paper, we focus on some variants of the Ramsey numbers that expand the notion to ordered graphs, colorings with an unbounded number of colors, and both. These variants are the ordered and unordered canonical Ramsey numbers and the ordered Ramsey numbers.

\paragraph{Notation.} 
For any graph $G$, we denote by $V(G)$ the set of vertices of $G$ and by $E(G)$ the set of edges of $G$. We denote by $\abs{G} = \abs{V(G)}$ the \emph{size} of $G$. Without loss of generality, when $\abs{G} = n$ we set $V(G) = \nint\coloneqq \{1,\ldots, n\}$. We denote by $K_n = \left\{\nint, \EKn \right\}$ the complete graph of size $n$.

A \emph{linear ordering} (or simply ordering) $\pi$ of $G$ is a permutation of the vertices of $G$. The pair $\G = (G, \pi)$ is called an \emph{ordered graph}. 
 Ordered graphs can be visualized by drawing all the vertices on a horizontal line in order. In Figure~\ref{fig:allP3}, we show all orderings of the path of length $3$ $P_3$.

\begin{figure}[H]
	\captionsetup[subfigure]{font=footnotesize}
	\centering
	\begin{subfigure}{.25\textwidth}
		\centering
            \begin{tikzpicture}[scale=1]
            \draw (0,0) node[vertex_u]{} (1,0) node[vertex_u]{} (2,0) node[vertex_u]{};
            \draw[thick] (0,0) to[bend left=50] (1,0);
            \draw[thick] (1,0) to[bend left=50] (2,0);
            \end{tikzpicture}
            \caption{$(P_3, (1,2,3))$}
            \label{subfig:monP3}
	\end{subfigure}
	\begin{subfigure}{.25\textwidth}
            \centering
            \begin{tikzpicture}[scale=1]
            \draw (0,0) node[vertex_u]{} (1,0) node[vertex_u]{} (2,0) node[vertex_u]{};
            \draw[thick] (0,0) to[bend left=50] (2,0);
            \draw[thick] (1,0) to[bend left=50] (2,0);
            \end{tikzpicture}
            \caption{$(P_3, (1,3,2))$}
            \label{subfig:P3_132}
	\end{subfigure}
	\begin{subfigure}{.25\textwidth}
		\centering
            \begin{tikzpicture}[scale=1]
            \draw (0,0) node[vertex_u]{} (1,0) node[vertex_u]{} (2,0) node[vertex_u]{};
            \draw[thick] (0,0) to[bend left=50] (1,0);
            \draw[thick] (0,0) to[bend left=50] (2,0);
            \end{tikzpicture}
            \caption{$(P_3, (2,1,3))$}
            \label{subfig:P3_213}
	\end{subfigure}
	\caption{All orderings of a path on $3$ vertices.}
	\label{fig:allP3}
\end{figure}
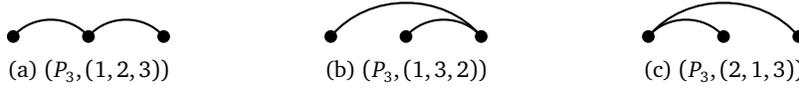

Two ordered graphs $(G, \pi)$ and $(H, \rho)$ are \emph{isomorphic} if $G$ and $H$ are isomorphic via a graph isomorphism that preserves the orderings $\pi$ and $\rho$.
In other words, there exists a bijective mapping $\phi:V(G) \rightarrow V(H)$ such that $\edge \phi(i)\phi(j)$ is an edge of $H$ if and only if $\edge ij$ is an edge of $G$, and $\rho(\phi(i))<\rho(\phi(j))$ if $\pi(i)<\pi(j)$. Simply put, two ordered graphs (on the same vertices) are isomorphic if and only if they are identical. 
$\G = (G, \pi)$ is an \emph{ordered subgraph} of $\Hcal = (H, \rho)$ if $G$ is a subgraph of $H$ and the relative ordering of the vertices of $G$ given by $\pi$ follows the ordering of the vertices in $\Hcal$. 
We also say that $\G$ is \emph{contained} in $\Hcal$, or that $\Hcal$ contains a \emph{copy} of $\G$.

Note that there is only one ordered graph $(K_n, \pi)$ up to isomorphism. To ease notation, we refer to the ordered complete graph of order $n$ simply as $K_n$.

We now provide more details on the three variants of Ramsey numbers considered in this paper. Each variant records the minimum integer 
$n$ for which a certain pattern appears in every coloring of $K_n$  with either two or infinitely many colors. For each of these numbers, we refer to a colored complete graph in which none of these patterns appear as a \emph{Ramsey graph}, and we say that this graph is \emph{feasible}.

\subsection{Canonical Ramsey numbers}

The canonical variant of the Ramsey numbers was introduced in $1950$ by \Erdos{} and Rado. It generalizes Ramsey numbers to colorings with an arbitrary number of colors in ordered graphs. 

\begin{definition}
We say that an ordered graph $\G=(G,\pi)$ is \emph{canonically colored} if one of the following three conditions holds:
\begin{itemize}[label=\raisebox{0.22ex}{\tiny$\bullet$}]
    \item $\G$ is monochromatic: all edges have the same color,
    \item $\G$ is rainbow: all edges have different colors,
    \item $\G$ is (lower/upper) lexicographically colored: edges $\edge{\pi(i)}{\pi(j)}$ and $\edge{\pi(k)}{\pi(\ell)}$ have the same color if and only if $\min(\pi(i),\pi(j)) = \min(\pi(k),\pi(\ell))$ (resp.\ $\max(\pi(i),\pi(j)) = \max(\pi(k),\pi(\ell))$).
\end{itemize}
\end{definition}

In $1950$, \Erdos{} and Rado proved the following theorem, usually called the Canonical Ramsey Theorem.
\begin{theorem}[\Erdos{}, Rado~\cite{Erdoes1950}]
\label{th:ErdosRado}
    For every integer $p$, there exists a least positive integer $n$ such that every edge-coloring of $K_n$ (with an arbitrary number of colors) contains a canonically colored copy of $K_p$.
\end{theorem}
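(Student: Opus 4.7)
The plan is to reduce the theorem to the hypergraph Ramsey theorem for $4$-uniform hypergraphs, applied to an auxiliary coloring whose colors record local color-pattern information on $4$-sets. Given an edge-coloring $\chi$ of $K_N$, I assign to each $4$-subset $S = \{a<b<c<d\}$ the partition $\tau(S)$ of its six edges $\{ab, ac, ad, bc, bd, cd\}$ into classes of equal color. Since $\tau(S)$ is one of at most $B_6 = 203$ set partitions of a $6$-element set, this defines a coloring of $\binom{[N]}{4}$ with boundedly many colors. The hypergraph Ramsey theorem then produces, for $N$ sufficiently large in terms of $p$, a subset $W \subseteq [N]$ of size $p$ on which every $4$-subset has the same partition type $\tau$.

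The next step is to classify the \emph{coherent} partitions, namely those $\tau$ that can arise simultaneously on every $4$-subset of some set of size at least $5$. I would show that exactly four partitions are coherent, corresponding to the four canonical colorings: the all-equal partition (monochromatic); the all-singletons partition (rainbow); the min-lex partition $\{\{ab, ac, ad\}, \{bc, bd\}, \{cd\}\}$; and the max-lex partition $\{\{ab\}, \{ac, bc\}, \{ad, bd, cd\}\}$. The argument is to fix a $5$-subset $\{a<b<c<d<e\}$ and use the consistency of $\tau$ across its five $4$-subsets to rule out every other candidate. Tracking a few ``generating'' equivalences such as ``$ab \sim ac$'' versus ``$ab \sim bc$'' and propagating them across overlapping $4$-subsets eliminates hybrid partitions and forces $\tau$ into one of the four canonical shapes.

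To finish, I note that any two edges in $W$ lie in some common $4$-subset (they span at most four vertices), so the restriction $\chi|_W$ is entirely determined by the shared type $\tau$; the four coherent possibilities then yield exactly the four canonical colorings on the size-$p$ set $W$. One may therefore take $N(p)$ to be the $4$-uniform Ramsey number with $B_6$ colors that guarantees a monochromatic set of size $p$. The main obstacle is the combinatorial case analysis in the second step: while each exclusion is individually easy, enumerating the $203$ partitions and systematically eliminating all non-canonical ones requires careful bookkeeping of which pairs of edges appear together in which $4$-subsets of the $5$-set.
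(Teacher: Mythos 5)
The paper does not prove this statement: Theorem~\ref{th:ErdosRado} is quoted as a classical result of Erd\H{o}s and Rado with a citation to their 1950 paper, so there is no in-paper argument to compare against. Your proposal is, in outline, the standard proof of the canonical Ramsey theorem for pairs: color each $4$-set by the induced partition of its six edges into color classes (one of $B_6=203$ possibilities), apply the $4$-uniform Ramsey theorem with $203$ colors, and then show that the only partition types that can occur consistently on all $4$-subsets of a sufficiently large set are the four canonical ones. This is sound, and the final step is also right: any two edges span at most four vertices, so the equality pattern of $\chi$ on $W$ (not the colors themselves, which is what ``entirely determined'' should say) is read off from the common type $\tau$, and each of the four coherent types globally reconstitutes a canonical coloring. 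Two small repairs are needed. First, for $p\le 4$ a homogeneous set of size $p$ carries no consistency constraint (a single $4$-set can have any of the $203$ types), so you should take $W$ of size $\max(p,5)$ and use that canonical colorings are hereditary. Second, the classification of coherent types is asserted rather than executed; it is a finite check and the propagation arguments you indicate do work on a $5$-set (e.g.\ $ab\sim cd$ forces, via the $4$-sets $\{1,2,3,4\}$, $\{1,2,4,5\}$, $\{2,3,4,5\}$, relations like $ab\sim bc$ that collapse $\tau$ to the monochromatic type, and $ad\sim bc$ alone contradicts itself via $\{1,2,3,4\}$ and $\{1,2,3,5\}$), but if any case resisted you could simply enlarge the homogeneous set, so this is not a structural obstacle. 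As written, the proposal is a correct sketch of the classical argument rather than a new route.
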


This integer $n$ is called the \emph{canonical Ramsey number}, or \Erdos{}--Rado number, and is denoted by~$\ER{p}$.

\paragraph{Known bounds.} In their proof of Theorem~\ref{th:ErdosRado}, \Erdos{} and Rado provide an upper bound on $\ER{p}$.

This bound was improved by Lefmann and R\H{o}dl~\cite{Lefmann1993,Lefmann1995}, who proved that \begin{equation*}
    2^{c_1p^2} \leq \ER{p} \leq 2^{c_2p^2\log p}
\end{equation*}
for some constants $c_1$ and $c_2$. In $2023$, Kam\v{c}ev and Schacht prove lower bounds for this number in binomial random graphs~\cite{Kamcev2023}.

We also consider the canonical Ramsey number of non-complete graphs. For any ordered graph $\G$, we denote by $\ER{\G}$ the least positive integer $n$ such that every edge-coloring of $K_n$ contains a canonically colored copy of $\G$.

Tighter asymptotic bounds on canonical Ramsey numbers for general graphs were computed for some families of graphs, such as equipartitioned stars~\cite{Jamison2004} and cycles~\cite{Axenovich2005}.
Asymptotic bounds were derived for the asymmetric canonical Ramsey numbers, a variant in which one does not forbid a single canonically colored graph but a family composed of one monochromatic, one rainbow and one lexicographically colored graph~\cite{Jiang2000, Jamison2004, Axenovich2005}. 

Other known bounds for the canonical Ramsey numbers and their variants can be found in Section 3.3. of the dynamic survey on rainbow variants of Ramsey numbers by Fujita, Magnant and Ozeki~\cite{Fujita2010}.

We now examine one of these variants in more depth: the unordered canonical Ramsey numbers.

\subsection{Unordered canonical Ramsey numbers}

In $2000$, Richer~\cite{Richer2000} introduced the \emph{unordered canonical Ramsey numbers}, a variant of the canonical Ramsey numbers in the setting of unordered graphs. To do so, he defined the concept of an \emph{orderable coloring}.
\begin{definition}[Richer~\cite{Richer2000}]
    An edge-coloring  $\chi \ \colon \ E(G)\rightarrow \omega$ of a graph $G$ is called \emph{orderable} if there exists an ordering $\pi$ of the vertices of $G$ such that the color of an edge is completely determined by its lower point. In other words, for any three vertices $i,j,k$ with $i$ adjacent to $j$ and $k$, if $\pi(i)<\pi(j)$ and $\pi(i)<\pi(k)$, then $\chi(\edge{\pi(i)}{\pi(j)}) = \chi(\edge{\pi(i)}{\pi(k)})$.
\end{definition}

Contrary to the definition of a lexicographically colored graph of \Erdos~and Rado, this definition does not require two edges with different lower ends to have different colors. Hence, monochromatic graphs are orderable.

\begin{definition}[Richer~\cite{Richer2000}]
    The unordered canonical Ramsey number $\CR{s,t}$ is the smallest integer $n$, such that every edge-coloring of $K_n$ contains either an orderable copy of $K_s$ or a rainbow copy of $K_t$.
\end{definition}

\paragraph{Known bounds.}
Richer gave bounds on the unordered canonical Ramsey numbers: for every pair $s,t$ of integers, \begin{equation}
\label{eq:boundsUCRN}
    \left(\binom{t}{2}-1\right)^{s-2} + 1 \leq \CR{s,t} \leq 7^{3-s}t^{4s-4}.
\end{equation}

To our knowledge, these remain the best non-asymptotic bounds for general graphs.
Tighter asymptotical bounds were introduced by Babai in $1985$~\cite{Babai1985} and improved by Alon et al. in $1995$~\cite{Lefmann1995}, both papers using probabilistic arguments.
In $2009$, the gap between the asymptotic upper and lower bounds was tightened by Jiang~\cite{Jiang2009}, before being closed by Araujo and Peng in $2024$~\cite{Araujo2024}, who obtained that
\begin{equation*}
    \CR{s,t} = \Theta\left(\frac{c\cdot t^3}{\log t}\right)^{s-2}.
\end{equation*}
 Araujo and Peng introduced as well the variant $\ER{m,l,r}$, which is the minimum $n$ such that every copy of $K_n$ contains either a monochromatic $K_m$, a (strictly) lexicographically colored $K_l$ or a rainbow $K_r$, and studied the asymptotic behavior of this number.

\paragraph{General graphs.}In this paper, we also extend the notion of unordered canonical Ramsey numbers from complete to general graphs.
\begin{definition}
    Let $G$ and $H$ be finite graphs. The unordered canonical Ramsey number $\CR{G, H}$ is the smallest integer $n$ such that every edge-coloring of $K_n$ contains either an orderable copy of $G$ or a rainbow copy of $H$.
\end{definition}

This generalizes the unordered canonical Ramsey number introduced by Richer: in particular, we have $\CR{s,t} = \CR{K_s, K_t}$.

Here, in addition to the usual unordered canonical Ramsey numbers for complete graphs, we will study, in particular, the ones of bipartite graphs. This is partly motivated by the work of Gishboliner, Milojevi\'c, Sudakov and Wigderson~\cite{Gishboliner2024}, who considered an intermediate variant between the ordered and unordered canonical Ramsey Numbers $\ER{H}$ and $\CR{G,H}$, where they do not forbid all orderable colorings, but only the strict lexicographic orderable colorings. They showed that this number grows polynomially for ($d$-regular) bipartite graphs and exponentially for non-bipartite ($d$-regular) graphs.

This gives us reason to think that the unordered canonical Ramsey numbers may behave similarly. The bounds for bipartite graphs may stay within reach more than the ones for complete graphs as the graphs get bigger.

\subsection{Ordered Ramsey Numbers}

We now turn our attention to ordered Ramsey numbers, another variant of Ramsey numbers for ordered graphs. These are more similar to the original Ramsey numbers since they involve $2$-colorings of edges.

\begin{definition}
    The ordered Ramsey number $\ordRam{\G}$ of an ordered graph $\G$ is the smallest integer $n$ such that every $2$-edge-coloring of $K_n$ contains a monochromatic copy of $\G$.
\end{definition}

Here, we restrict ourselves to the diagonal ordered Ramsey numbers. The off-diagonal variant was introduced in~\cite{Choudum2002}: the ordered Ramsey number $\ordRam{\G_1, \ldots, \G_n}$ of $n$ given ordered graphs, is the smallest integer $n$, such that every $n$-edge-coloring $(c_1, \ldots, c_n)$ of $K_n$ contains a copy of $\G_i$ of color $c_i$ for at least one $i$.

 Ordered Ramsey numbers were defined by Choudum and Ponnusamy in $2002$~\cite{Choudum2002}. However, their study took off in $2015$, driven by the simultaneous and independent works of Balko, Cibulka, Kr\'al' and Kyn\v{c}l~\cite{Balko2015, Balko2020} and Conlon, Fox, Lee and Sudakov~\cite{Conlon2017}. They are an interesting extension of Ramsey numbers of graphs because different orderings of the same graph may have different Ramsey numbers.

\paragraph{Known bounds.}
The following are trivial bounds for any ordered graph $\G=(G, \pi)$ on $n$ vertices:
\begin{equation}
    \Ram{G} \leq \ordRam{\G} \leq \Ram{K_n}.
    \label{eq:ORBound1}
\end{equation}

\Erdos{} and Szekeres~\cite{Erdoes1947, Erdoes2009} provided, as soon as $1935$, the following bounds on the Ramsey number of a complete graph\begin{equation*}
     2^{n/2} \leq \Ram{K_n} \leq 2^{2n}.
\end{equation*}
A recent breakthrough by 
 Campos, Griffiths, Morris and  Sahasrabudhe~\cite{Campos2023} improved the upper bound to $\Ram{K_n} \leq 
(4-\varepsilon)^{n}$. Their argument was subsequently optimized by Gupta, Ndiaye, Norin and Wei~\cite{Gupta2024}
who obtained $\Ram{K_n} \leq 
3.8^{n}$.
These results, together with~\eqref{eq:ORBound1}, ensure the existence and finiteness of the Ramsey number for any ordered graph. 
As detailed in~\cite{Conlon2017}, further results of \Erdos{} and Szekeres~\cite{Erdoes2009} can directly be used to deduce that \begin{equation}
    \ordRam{\Pmono{n}} = (n-1)^2-1,
    \label{eq:ordRamMonoPath}
\end{equation} where $\Pmono{n}$ denotes the monotone path on $n$ vertices, as depicted for instance in Figures~\ref{subfig:monP3} and~\ref{subfig:Pmono}.
However, other orderings of paths behave differently.  Let us call \emph{alternating path}, denoted $\Palt{n}$, the ordered path on $n$ vertices $(P_n, \pi_{\mathrm{alt}})$ where $\pi_{\mathrm{alt}} = (1,n, 2, n-1, 3\ldots)$, depicted in figures~\ref{subfig:P3_132} and \ref{subfig:Palt}.  The authors of~\cite{Balko2015, Balko2020} show that the ordered Ramsey numbers of these graphs are linear in $n$ and give their exact values up to $n=8$.
They prove as well that for the monotonous cycle on $n$ vertices $\Cmono_n$ (depicted in~\ref{subfig:monCycle}), we have
\begin{equation*}
    \ordRam{\Cmono_n} = 2n^2 - 6n + 6.
\end{equation*} 
In~\cite{Balko2020} the exact ordered Ramsey numbers of three orderings of $C_4$ are also given.
Choudum and Ponnusamy~\cite{Choudum2002} show that the ordered Ramsey number of any ordering of the star on $n$ vertices is linear in $n$. Overman, Alm, Coffey and Langhoff~\cite{Overman2018} compute upper bounds on several ordered Ramsey numbers of graphs of size $4$, as well as some exact numbers using a SAT solver.

\begin{figure}[H]
	\captionsetup[subfigure]{font=footnotesize}
	\centering
	\begin{subfigure}{.32\textwidth}
            \centering
            \begin{tikzpicture}[scale=.9]
            \foreach \x in {0,1,2,3,4}{\draw (\x,0) node[vertex_u]{};}
            \draw[thick] (0,0) to[bend left=50] (1,0);
            \draw[thick] (1,0) to[bend left=50] (2,0);
            \draw[thick] (2,0) to[bend left=50] (3,0);
            \draw[thick] (3,0) to[bend left=50] (4,0);
            \end{tikzpicture}
            \caption{$\Pmono{5}$ }
            \label{subfig:Pmono}
	\end{subfigure}
	\begin{subfigure}{.32\textwidth}
		\centering
            \begin{tikzpicture}[scale=.9]
            \foreach \x in {0,1,2,3,4}{\draw (\x,0) node[vertex_u]{};}
            \draw[thick] (0,0) to[bend left=50] (4,0);
            \draw[thick] (1,0) to[bend left=50] (4,0);
            \draw[thick] (1,0) to[bend left=50] (3,0);
            \draw[thick] (2,0) to[bend left=50] (3,0);
            \end{tikzpicture}
            \caption{ $\Palt{5}$}
            \label{subfig:Palt}
	\end{subfigure}
        \begin{subfigure}{.32\textwidth}
            \centering
            \begin{tikzpicture}[scale=.9]
            \foreach \x in {0,1,2,3,4}{\draw (\x,0) node[vertex_u]{};}
            \draw[thick] (0,0) to[bend left=50] (1,0);
            \draw[thick] (1,0) to[bend left=50] (2,0);
            \draw[thick] (2,0) to[bend left=50] (3,0);
            \draw[thick] (3,0) to[bend left=50] (4,0);
            \draw[thick] (0,0) to[bend left=50] (4,0);
            \end{tikzpicture}
            \caption{$\Cmono_5$}
            \label{subfig:monCycle}
	\end{subfigure}
	\caption{Monotonous and alternating $P_5$ and monotonous $C_5$.}
	\label{fig:P5C5}
\end{figure}
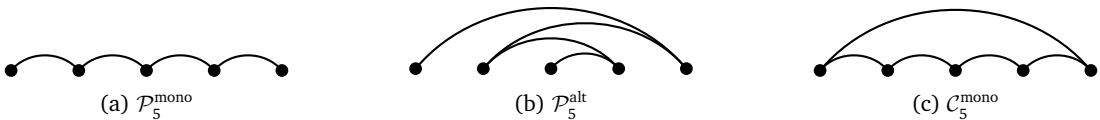

Ordered Ramsey numbers can prove very challenging to compute even for graphs with an ``easy'' structure whose unordered versions have linear Ramsey numbers. For example, it was proven in both~\cite{Balko2020} and~\cite{Conlon2017} that there exist orderings of matchings whose ordered Ramsey number grows super-polynomially. The authors of these papers also derived bounds for graphs depending on their degeneracy, interval chromatic number, or bandwidth. This, in turn, allowed them to show that ordered Ramsey numbers of graphs, for which any of the previous parameters is constant, grow polynomially in the number of vertices of the graph. In~\cite{Neidinger2019}, Neidinger and West obtained new exact ordered Ramsey numbers and some new bounds for several classes of graphs with interval chromatic number equal to $2$. 
In $2019$, Geneson et al.~\cite{Geneson2019} examined the effects of various graph operations on ordered Ramsey numbers, including the addition of a vertex, an edge, or the disjoint union of two graphs. They also derived new upper bounds for several families of matchings.

For a very comprehensive overview of existing bounds and results on ordered Ramsey numbers, we refer to the recent survey of Balko~\cite{Balko2025}.
\subsection{Overview}

We summarize here the notation, types of graphs and forbidden structures of the variants of the Ramsey numbers considered in this paper.
\begin{table}[H]
    \centering\small
    \begin{tabular}{ll|ll}\toprule 
        Variant &  & Graph type & Forbids\\\midrule
        Ramsey number & $\Ram{G}$ & 2-color graph & monochromatic $G$\\[0.5em]
        Canonical Ramsey number & $\ER{G}$ & $\infty$-color ordered graph & \begin{tabular}{@{}l@{}}monochromatic $G$ \\ lexic.\ colored $G$ \\ rainbow $G$ \end{tabular}\\[1.7em]
        Unordered Canonical Ramsey number & $\CR{G,H}$ & $\infty$-color graph & \begin{tabular}{@{}l@{}}orderable $G$ \\ rainbow $H$\end{tabular}\\[0.7em]
        Ordered Ramsey number & $\ordRam{G}$ & 2-color ordered graph & monochromatic $G$\\\bottomrule
    \end{tabular}
    \caption{Overview of the variants of Ramsey numbers we consider.}
    \label{tab:my_label}
\end{table}

\section{Upper bounds: flag algebras}
\label{sec:FA}

Razborov introduced \emph{flag algebras} in the seminal paper~\cite{Razborov2007}, generalizing ideas in extremal combinatorics in a common framework of model theory. He followed with the paper~\cite{razborov_minimal_2008}, where he first applied the theory to determine the edge-triangle graph profile, i.e., the minimum density of triangles given the edge density of a graph. Since then, there have been a multitude of applications of flag algebras to not just extremal graph theory~\cite{MR4585299, MR4472294}, 
but also oriented graphs~\cite{MR4109639,MR4390914},
Ramsey numbers~\cite{2023_KiemPokuttaSpiegel_4colorramsey,Lidicky2021, Lidicky2024},
hypergraphs~\cite{MR4608432,FALGASRAVRY2012}, permutations~\cite{Crudele2024Six, Sliacan2017, Chan2020},
hypercubes~\cite{bodnár2025exactvaluesinducibilitystatistics,Balogh2014Cube}, crossing number~\cite{MR3982073,Balogh2023}, SAT functions~\cite{Balogh2023SAT}, trees~\cite{brosch2024gettingrootproblemsums}, 
vector spaces~\cite{2023_RueSpiegel_Radomultiplicity},
or phylogenetic trees~\cite{Alon2016} to name a few.

While originally designed for asymptotic extremal graph theory, bounding problems in graph limits, their use was first extended to finite graphs by \Lidicky{} and Pfender to compute upper bounds on Ramsey numbers~\cite{Lidicky2021}. Here, we apply their ideas to the colored and ordered variants of Ramsey numbers as introduced above. The approach is analogous to theirs: we compute upper bounds for the edge density in \emph{blow-up graphs}, graph limits obtained from a finite (Ramsey-)graph by replacing each vertex with increasingly large independent sets.

For a detailed and rigorous introduction to flag algebras and their use in computing Ramsey numbers, we refer to \Lidicky{} and Pfender's~\cite{Lidicky2021} work. 
We also recommend \cite{brosch2024gettingrootproblemsums} for an introduction of the method. 
To apply flag algebras to the setting of ordered and colored graphs considered here, we need to work with two variants: the colorblind and the ordered flag algebra, which we explain in this section. 

On an intuitive level, flag algebras work by providing a framework to prove linear relations between densities of small subgraphs. 
For a graph $G$ in $H$, denote by $N(G,H)$ the number of subsets $X$ of size $|V(G)|$ in $H$ such that $X$ induces in $H$ a graph isomorphic to $H$. 
The \emph{density} of $G$ in $H$, denoted by $d(G,H)$, is $N(G,H)$ divided by $\binom{|V(H)|}{|V(G)|}$.
As an example, consider Mantel's theorem claims  that any $n$-vertex triangle-free graph has at most $n^2/4$ edges. 
This can be rephrased as a density question by asking what is maximum $d(K_2,G)$ over all $G$ satisfying $d(K_3,G) = 0$.
Flag algebras answers questions, where the objective and constraints can be express in terms of $d(F,G)$ for $F$s of constant sizes and the size of $G$ going to infinity. 
Since the proof should work for all sequences of $G$, the graph $G$ is often implicit and the problem formulation then reduces to something like maximize $d(K_2)$ subject to $d(K_3)=3$.
Often the $d$ is also dropped and the resulting formulation is
\begin{align*}
    \text{maximize} & \Fuu2\\
    \text{s.t. }& \Fuuu222 = 0.
\end{align*}

The main ingredient in the calculation is establishing that certain linear combination of densities are asymptotically non-negative by writing them as sum-of-squares. 
This allows us to employ semidefinite programming and search for proofs more efficiently and for more complex proofs than humans can.
In the natural setting, flag algebras work for large graphs.
However, investigating small graphs can be also possible. 
We describe the trick in the following section.

\subsection{General method}
\label{ssec:methodFA}

\paragraph{Blow-up graphs.} In order to use methods from asymptotic extremal graph theory, we use the method introduced by \Lidicky{} and Pfender in $2021$ to compute bounds for Ramsey numbers using flag algebras~\cite{Lidicky2021, Lidicky2024}.

A \emph{balanced $k$-blow-up} of a graph $G$ is the graph obtained by replacing every vertex of $G$ with an independent set of size $k$ and every edge by a complete bipartite graph. The blow-up of the cycle graph $C_5$ is depicted in Figure~\ref{fig:C5BU}.
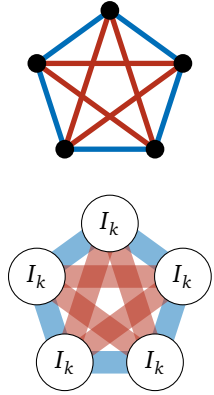
\begin{figure}[H]
\centering
\begin{tikzpicture}
\myclip
\draw[bluethinline] 
(0,0) coordinate (a) --
++(72:\e)coordinate(b)  --
++(2*72:\e)coordinate(c) -- 
++(3*72:\e)coordinate(d) -- 
++(4*72:\e)coordinate(e) -- (a)
 ;
\draw[redthinline](a)--(c);
\draw[redthinline](c)--(e);
\draw[redthinline](e)--(b);
\draw[redthinline](b)--(d);
\draw[redthinline](d)--(a);

\draw
(a) node[insep]{}
(b) node[insep]{}
(c) node[insep]{}
(d) node[insep]{}
(e) node[insep]{}
;
\end{tikzpicture}

\begin{tikzpicture}
\myclip
\draw[blueline] 
(0,0) coordinate (a) --
++(72:\e)coordinate(b)  --
++(2*72:\e)coordinate(c) -- 
++(3*72:\e)coordinate(d) -- 
++(4*72:\e)coordinate(e) -- (a)
 ;
\draw[redline](a)--(c);
\draw[redline](c)--(e);
\draw[redline](e)--(b);
\draw[redline](b)--(d);
\draw[redline](d)--(a);

\draw
(a) node[turan]{$I_k$}
(b) node[turan]{$I_k$}
(c) node[turan]{$I_k$}
(d) node[turan]{$I_k$}
(e) node[turan]{$I_k$}
;
\end{tikzpicture}

\caption{$2$-colored $C_5$ and its even blow-up.}
\label{fig:C5BU}
\end{figure}
Blow-ups of complete (colored) graphs are characterized by forbidding two types of subgraphs on three vertices: graphs with exactly one edge and graphs with exactly two edges with different colors. The forbidden graphs are depicted in Figure \ref{fig:forbidden-blow-up}.

\begin{figure}[H]
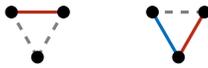

    \centering
    \begin{align*}
         \Fuuu001 \hskip 3em \Fuuu120 
    \end{align*}
    \caption{The forbidden graphs in blow-ups of simple complete 2-edge-colored graphs.}
    \label{fig:forbidden-blow-up}
\end{figure}

For completeness, we give the following claim.
\begin{claim}
    In a blow-up of a complete edge-colored graph, there is no triple of vertices with exactly two non-edges and no triple inducing exactly two edges of different colors.
\end{claim}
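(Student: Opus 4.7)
The claim follows from a direct case analysis on how the three chosen vertices distribute across the parts of the blow-up. My plan is the following. Let $G^\ast$ be a blow-up of a complete edge-colored graph $G$, and write $I_v \subseteq V(G^\ast)$ for the independent set replacing $v \in V(G)$. Pick any three vertices $x,y,z \in V(G^\ast)$; each belongs to some part, say $x \in I_a$, $y \in I_b$, $z \in I_c$ for some (not necessarily distinct) $a,b,c \in V(G)$.

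I would then split according to $|\{a,b,c\}|$. If $|\{a,b,c\}|=1$, all three vertices lie in the same independent set, so the induced triple has three non-edges. If $|\{a,b,c\}|=3$, all three pairs cross distinct parts of the blow-up, so by construction all three pairs are edges, colored according to the corresponding edges of $G$; hence there are zero non-edges. The only case that needs attention is $|\{a,b,c\}|=2$, say $a=b\neq c$: then $xy$ is a non-edge (both lie in $I_a$), while $xz$ and $yz$ are both edges, and by construction both inherit the color of the single edge $ac$ of $G$, so they share the same color.

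Comparing with the two forbidden configurations in Figure~\ref{fig:forbidden-blow-up}: exactly one non-edge (equivalently, exactly two edges) can only occur in the case $|\{a,b,c\}|=2$, and there the two edges necessarily share a color, ruling out the ``two edges of different colors'' pattern. Exactly two non-edges cannot occur at all, since the number of non-edges in a triple is either $0$, $1$, or $3$ depending on the case above. This establishes both assertions of the claim.

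I do not expect any genuine obstacle here: the statement is essentially a restatement of the definition of a blow-up, and the whole argument is a three-line case check. The only thing worth being careful about is matching the phrasing of the claim to the pictured forbidden subgraphs, namely reading ``exactly two edges of different colors'' as ``the induced triple has one non-edge and two edges carrying distinct colors'' — once this is fixed, the case $|\{a,b,c\}|=2$ immediately gives the conclusion.
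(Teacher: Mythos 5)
Your proof is correct and follows essentially the same route as the paper's: both arguments rest on the two facts that non-adjacency in the blow-up is equivalent to lying in the same part, and that the bipartite graph between two distinct parts is monochromatic. Your explicit trichotomy on $|\{a,b,c\}|$ is just a slightly more systematic packaging of the paper's reasoning, so nothing is missing.
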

\begin{proof}
Let $B(G)$ be a blow-up of the complete edge-colored graph $G$.
Each vertex of $G$ is replaced by an independent set and these are the only maximal independent sets in $B(G)$. 
Observe that if $u$ and $v$ are non-adjacent, then they are part of the same maximal independent set.
Hence, if there are three vertices $u,v,x$ in $V(B(G))$ where both $uv$ and $vx$ are non-edges, then $vx$ must also be a non-edge. This rules out the triple with exactly two non-edges.
In $B(G)$, the complete monochromatic bipartite graph between any two independent sets is monochromatic. If there is a triple $u,v,x \in V(B(G))$ with exactly one non-edge $u,v$, then $u,v$ are in the same independent set, and $x$ is in a different one. Hence, both edges $ux$ and $vx$ come from the same complete monochromatic bipartite graph and have the same color. This rules out the second configuration of a triple with exactly two edges that have different colors.
\end{proof}

If $G$ is a complete graph on $n$ vertices, the $k$-blow up of the graph has exactly $n\binom{k}{2}$ non-edges. Thus, as $k$ grows, the density of non-edges approaches
\begin{equation*}
    \lim_{k\to\infty} \frac{n\binom{k}{2}}{\binom{kn}{2}} = \lim_{k\to\infty}\frac{nk(k-1)/2}{kn(kn-1)/2} = \frac{1}{n}.
\end{equation*}
As the balanced blow-up, under all possible blow-ups of $G$, minimizes the density of non-edges, any lower bound $\delta$ on the asymptotic density of non-edges gives an upper bound of $1/\delta$ on the number of vertices $n$ of $G$. 

\begin{example}
The (usual, two-colored) Ramsey number $R(3,3)$ can be reformulated as
\begin{align*}
    R(3,3) = 1 + 1/\inf \enspace & \Fuu0\\
    \text{s.t. }& \Fuuu222 = \Fuuu111 = 0, && \text{(Ramsey graph)}\\
    & \Fuuu002 = \Fuuu001 = \Fuuu120 = 0. && \text{(Blow-up graph)}\\
\end{align*}

\end{example}

\subsection{Colorblind flag algebras}
\label{ssec:colorblindFA}
All variants of the Ramsey numbers considered here are color-permutation invariant: Both ordered and unordered canonical Ramsey numbers forbid subgraphs solely depending on the \emph{patterns} of colors, not the colors themselves. Of the ordered Ramsey numbers, we only consider diagonal Ramsey numbers, which forbid the same graphs for each color.

Thus, we can state all problems in a \emph{colorblind} fashion: We do not care about the specific colors of the edges, only whether two edges have identical or different colors; we partition the edges into disjoint subsets. While we restrict the number of partitions to two for ordered Ramsey numbers, we allow any number of parts for canonical un-/ordered Ramsey numbers. 

Thus, we can handle infinitely many colors by working with the flag algebra of colorblind graphs. Note that the resulting bounds may worsen compared to working with colored graphs~\cite{Kiem2023}.

\begin{example}
In the colorblind flag algebra (with at most two color classes), the Ramsey number $R(3,3)$ is given by
\begin{align*}
    R(3,3) = 1 + 1/\inf \enspace & \Fuu0\\
    \text{s.t. }& \Fuuu222 = 0, && \text{(Ramsey graph)}\\
    & \Fuuu002 = \Fuuu120 = 0. && \text{(Blow-up graph)}\\
\end{align*}
Note that the colors indicate color classes, not specific colors. In particular, setting the blue triangle to zero means all monochromatic triangles are zero.

Similarly, the unordered Ramsey number $\CR{3,4}$ in the colorblind flag algebra (with unlimited color classes) can be reformulated as
\begin{align*}
    \CR{3,4} = 1 + 1/\inf \enspace & \Fuu0\\
    \text{s.t. }& \Fuuu222 = \Fuuu221 = 0, && \text{(Orderable $C_3$)}\\
    & \Fuuuu123456 = 0, && \text{(Rainbow $K_4$)}\\
    & \Fuuu002 = \Fuuu120 = 0. && \text{(Blow-up graph)}\\
\end{align*}
\end{example}

Formally, we define a colorblind graph as a pair $G=(V, \{E_1, \ldots, E_k\})$, where $V$ is the set of vertices, and the edges $E = \bigcup_i E_i$ are partitioned into disjoint (non-empty) subsets. The parts $E_i$ are unordered; we do not distinguish between two colorblind graphs $(V, \{E_1, E_2\})$ and $(V, \{E_2, E_1\})$. The number of parts $k$ is either at most two (for ordered Ramsey numbers) or any integer (for Canonical Ramsey numbers). For a subset of vertices $S\subseteq V$, a subgraph $G[S]$ of $G$ is the graph $(S, \{E_1 \cap \binom{S}{2},\ldots, E_k\cap \binom{S}{2}\})$ where we remove empty intersections. Two colorblind graphs are isomorphic if and only if there is a graph isomorphism between them that preserves the partition of the edges. 

Thus, colorblind graphs fit into the general theory built by Razborov~\cite{Razborov2007}, and we can define densities, flags, and products in the flag algebra the usual way.

\subsection{Ordered flag algebras}
\label{ssec:orderedFA}

Flag algebra theory is applicable in the setting where the structures are ordered.
A natural ordered structure is permutations. See \cite{Crudele2024Six,Sliacan2017}
for an introduction to flag algebras on permutations.
Permutations can be modeled as ordered graphs, where inversions correspond to edges.
 In our case, we add an ordering to multicolored graphs.
Since the ordering of vertices is fixed, the isomorphism testing becomes trivial. On the other hand, the number of possible configurations increases dramatically since each graph is also equipped with a fixed order on its vertices. 

The blow-up argument can be applied to ordered graphs. When we blow up vertices into independent sets, we need the relative order of the independent sets to correspond to the relative order of the original vertices. Hence, we replace each vertex with a consecutive sequence of vertices forming an independent set, see Figure~\ref{fig:ordered-blow-up}. The condition that the independent sets are consecutive can be translated to flag algebras by forbidding two more configurations in the case of 2-edge-colored complete graphs; see Figure~\ref{fig:forbidden-ordered-blow-up}.

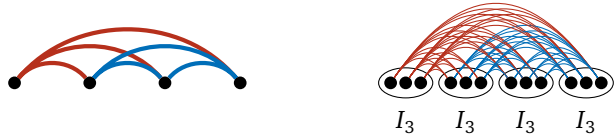
\begin{figure}[H]
    \begin{center}    
\begin{tikzpicture}
\draw
(0,0) node[vertex_u](a0){}
(1,0) node[vertex_u](b0){}
(2,0) node[vertex_u](c0){}
(3,0) node[vertex_u](d0){}
;
\draw[BrickRed,ultra thick]
(a0) to[bend left=50] (b0)
(a0) to[bend left=50] (c0)
(a0) to[bend left=50] (d0)
 ;
\draw[NavyBlue,ultra thick] 
(b0) to[bend left=50] (c0)
(b0) to[bend left=50] (d0)
(c0) to[bend left=50] (d0)
;
\begin{scope}[xshift=5cm,xscale=0.2,yscale=0.5]
\foreach \i in {0,1,2}{
  \draw
 (0+\i,0) node[vertex_u](a\i){}
 (4+\i,0) node[vertex_u](b\i){}
 (8+\i,0) node[vertex_u](c\i){}
 (12+\i,0) node[vertex_u](d\i){}
 ;
}
\foreach \x in {0,1,2,3}{
\draw (4*\x+1,0) ellipse (1.8cm and 0.4 cm);
\draw (4*\x+1,-0.5) node[below]{$I_3$}
;
}

\foreach \i in {0,1,2}{
\foreach \j in {0,1,2}{
 \draw[BrickRed]
 (a\i) to[bend left] (b\j)
 (a\i) to[bend left] (c\j)
 (a\i) to[bend left] (d\j)
 ;
\draw[NavyBlue]  
 (b\i) to[bend left] (c\j)
 (b\i) to[bend left] (d\j)
 (c\i) to[bend left] (d\j)
 ;
 }
 }
\end{scope}
\end{tikzpicture}
    \end{center}
    \caption{An ordered graph on $4$ vertices and its ordered blow-up.}
    \label{fig:ordered-blow-up}
\end{figure}

\newcommand{\OrderedthreeV}[3]{
        \vc{\begin{tikzpicture}[scale=0.75]
        \draw  \foreach \x in {0,1,2} {(\x, 0) node[vertex_u](x\x){}};
        \draw[edge_color#1] (x0)to[bend left=50](x1);
        \draw[edge_color#2] (x0)to[bend left=50](x2);
        \draw[edge_color#3] (x1)to[bend left=50](x2);
        \end{tikzpicture}}
}
\begin{figure}[h!]
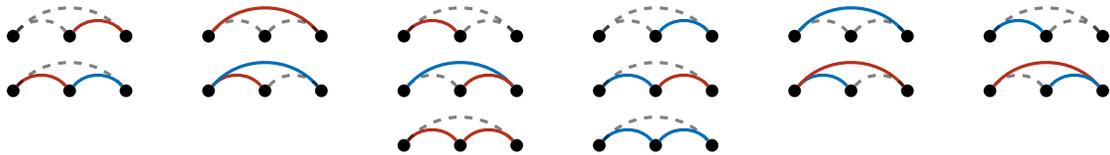

    \centering
    \begin{align*}
        \OrderedthreeV001
        &&
        \OrderedthreeV010        
        && 
        \OrderedthreeV100        
        &&
        \OrderedthreeV002
        &&
        \OrderedthreeV020        
        && 
        \OrderedthreeV200  
        \\
        \OrderedthreeV102
        && 
        \OrderedthreeV120
        && 
        \OrderedthreeV021
        && 
        \OrderedthreeV201
        && 
        \OrderedthreeV210
        && 
        \OrderedthreeV012
        \\
        &&&&
        \OrderedthreeV101
        && 
        \OrderedthreeV202
    \end{align*}
    \caption{The forbidden graphs in blow-ups of $2$-edge colored complete graphs. The first two lines come from orderings of graphs in Figure~\ref{fig:forbidden-blow-up}. The third line comes from each vertex being replaced by an independent set of consecutive vertices. }
    \label{fig:forbidden-ordered-blow-up}
\end{figure}

\section{Lower bounds: integer linear programming formulations and heuristics}

We use integer linear programming (ILP) approaches and tabu search to determine lower bounds on the unordered canonical Ramsey numbers and the ordered Ramsey numbers. While tabu search has the potential to scale to larger graphs, ILPs are more consistent and, in some cases, can be used to obtain sharp upper bounds by infeasibility certificates.  We also formulated the problem as an SAT instance, which we solved using a SAT-solver to obtain feasibility certificates and lower bounds on these numbers. However, this method did not scale well and failed to provide worthwhile bounds. For this reason, we do not include it here. 

\subsection{Integer linear programs for unordered canonical Ramsey numbers}
\label{ssec:ILPunorderedCanonical}

To decide whether the unordered canonical Ramsey number $\CR{G, H}$ is greater than $n$, we formulate an integer linear program that aims to find a coloring of $K_n$ without any orderable subgraph isomorphic to $G$, and any rainbow subgraph isomorphic to $H$. If the program is feasible (and terminates), it returns a Ramsey graph of size $n$, proving that $n+1$ is a lower bound for $\CR{G, H}$. If the program is infeasible, then we know that  $n$ is an upper bound for $\CR{G, H}$. 

We denote by $s$ the size of $G$ and $t$ the size of $H$. We consider the general case where $G$ and $H$ can be any graph, $\CR{s,t}$ being the special case where $G$ and $H$ are both complete.

\paragraph{A first ILP Formulation.}

 We start by introducing a first naive ILP formulation, with a variable for each edge and each possible color. We denote by $\cmax$ the maximum number of colors that can be used in the coloring. To be sure that we consider all possible colorings, $\cmax$ should be set to $\binom{t}{2}-1$, although in practice when a coloring exists, it requires significantly fewer colors. We denote by $\C \coloneqq \{1, \ldots, \cmax\}$ the set of possible colors of the edges.

We introduce the binary variables $x_{ij}^c$ for every $i\neq j \in \nint$ and $c \in \C$, where 
\begin{equation*}
	x_{ij}^c = 
	\begin{cases}
		1 \quad \text{ if edge } \edge ij \text{ has color }c,\\
		0 \quad \text{ otherwise.}
	\end{cases}
\end{equation*}

In practice, we only define these variables for $i<j$, identifying $x_{ij}^c = x_{ji}^c$. For simplicity, we will not concern ourselves with this when describing the constraints here.

To ensure that each edge gets assigned exactly one color, we need to enforce, for every $\edge ij \in \EKn$, that
\begin{equation*}
    \sum\limits_{c \in \C} x_{ij}^c =  1. 
\end{equation*}

To forbid the existence of rainbow subgraphs isomorphic to $H$, we enforce the following constraint. For every copy of $\tilde{H}$ in $K_n$, for all colors $c_{ij}\in \C$, 
\begin{equation*}
    \sum\limits_{\edge ij \in E(\tilde{H})} x_{ij}^{c_{ij}}  \leq \abs{E(H)}-1.
\end{equation*}

The constraints required to enforce the absence of orderable copies of $G$ differ for each graph $G$. We here only explain the ones we introduce for $G=C_3$ and $G=K_4$.

When $G=C_3$, we forbid the presence of orderable triangles: this is equivalent to requiring a proper coloring of the graph. We can enforce this with the following constraint: for all distinct $i,j,k \in \nint$, for every color $c \in \C$,
\begin{equation*}
    x_{ij}^c + x_{ik}^c \leq 1. 
\end{equation*}

For $G=K_4$, the constraint is slightly different. Indeed, the complete graph on $4$ vertices $K_4$ is orderable if and only if it contains one vertex $i$ with all three incident edges of the same color; and another vertex $j$ whose two incident edges different from $\edge ij$ have same color, as depicted in Figure~\ref{fig:ordK4}.

\begin{figure}[ht]
\centering
\begin{subfigure}[b]{.49\linewidth}
\center
\begin{tikzpicture}[scale=0.8]
\draw[thick] (0,0) to (1,0);
\draw[ultra thick, NavyBlue] (1,0) to (1,1);
\draw[ultra thick, BrickRed] (1,1) to (0,1);
\draw[ultra thick, BrickRed] (0,1) to (0,0);
\draw[ultra thick, NavyBlue] (0,0) to (1,1);
\draw[ultra thick, BrickRed] (1,0) to (0,1);

\foreach \i in {0,1} { 
\foreach \j in {0,1} {
\draw (\i,\j) node[vertex_u]{};
}}
\end{tikzpicture}

\end{subfigure}
\hspace{-3cm}
\begin{subfigure}[b]{.49\linewidth}
\center
\begin{tikzpicture}[scale=1]
\draw[ultra thick, BrickRed](0,0) to[bend left=25] (1,0);
\draw[ultra thick, BrickRed](0,0) to[bend left=40] (2,0);
\draw[ultra thick, BrickRed] (0,0) to[bend left=50] (3,0);
\draw[ultra thick, NavyBlue] (1,0) to[bend left=25] (2,0);
\draw[ultra thick, NavyBlue] (1,0) to[bend left=40] (3,0);
\draw[thick] (2,0) to[bend left=25] (3,0);

\foreach \i in {0,...,3} {
\draw (\i,0) node[vertex_u]{};
}
\end{tikzpicture}
\end{subfigure}
\caption{An orderable coloring of $K_4$.}
\label{fig:ordK4}
\end{figure}
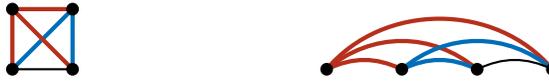

We can thus forbid this with the following constraint: for all pairwise-distinct vertices $i,j,k,\ell$,

all colors $c,d\in \C$,
\begin{equation*}
 x_{ij}^c + x_{ik}^c + x_{i\ell}^c + x_{jk}^d + x_{j\ell}^d \leq 4.  \label{ct:ordK4} \\
\end{equation*}

For $G=C_3$, we then obtain the following set of constraints
\begin{subequations}
	\label{eq:formulationUCR1}
	\begin{alignat}{3}
 &&  \sum\limits_{c \in \C} x_{ij}^c &=  1 && \forall \edge ij \in \EKn \nonumber\\       	
 && \sum\limits_{\edge ij \in  E(\tilde{H})} x_{ij}^{c_{ij}} & \leq\abs{E(H)}-1 \quad && \forall \ \tilde{H} \cong H, \ \forall c_{ij}\in \C \label{ct:rainH}\\
  && x_{ij}^c + x_{ik}^c &\leq 1 && \forall \text{distinct }i,j,k \in \nint, \forall  c \in \C \label{ct:ordTriangle}\\
	&&x_{ij}^c  &\in \{0,1\} && \forall i\neq j \in \nint, \forall c \in C. \nonumber
	\end{alignat}
\end{subequations}
The formulation is similar for any other forbidden orderable graph $G$, with the constraint~\eqref{ct:ordTriangle} being replaced by the appropriate constraint; for instance, constraint~\eqref{ct:ordK4} for $G=K_4$.

We can also choose to minimize (resp. maximize) the number of colors used by introducing variables $(z^c)_{c \in \C}$ that take value $1$ if and only if the color $c$ is used at least once in the coloring. We then minimize (resp. maximize) $\sum_{c \in \C} z^c$ and add the constraint $z^c\geq x_{ij}^c$ for every $\edge ij \in E(G)$ and~$c \in \C$ (resp. $z^c\leq \sum_{ij} x_{ij}^c$ for every $c \in \C$).

This program works well on very small instances. However, considering the exponential number of constraints, it is no surprise that this program quickly becomes virtually impossible to solve. In particular, there are $\cmax^{ t}\cdot \binom{n}{t}$ constraints of type~\eqref{ct:rainH}, forbidding the presence of a rainbow copy of $K_t$. This number becomes quickly so large that it is impossible even to generate all of the constraints.

\paragraph{A Colorblind ILP formulation.}

To tackle this scalability issue, we introduce another integer linear formulation for this problem, where we do not have to generate all possible combinations of colors: a colorblind formulation.

We introduce binary variables $y_{e,f}$ for all distinct edges $e,f \in \EKn$, such that \begin{equation*}
	y_{e,f} = 	
	\begin{cases}
		1 \quad \text{ if edges } e \text{ and } f \text{ have the same color},\\
		0 \quad \text{ otherwise.}
	\end{cases}
\end{equation*}

We identify $y_{e,f}=y_{f,e}$ for every pair of edges $(e,f)$, and, as before, always write edges $\edge ij$ without regard to the order of vertices. Note that we have the following relation between these variables and the $x$-variables previously introduced: for all distinct edges $e,f$ \begin{equation*} 
y_{e,f} = \sum\limits_{c \in \C} x_e^cx_f^c.
\end{equation*}
To accurately model a coloring, these variables have to respect transitivity: for any three edges $e,f,g$, if edges $e$ and $g$ and edges $f$ and $g$ have the same color, then edges $e$ and $f$ have the same color. We enforce these relations using the \emph{transitivity inequalities}, for all pairwise-distinct edges~$e,f,g$ 
\begin{equation*}
\label{eq:transitivity}
  y_{e,f} - y_{e,g} -y_{f,g} \geq  - 1.
\end{equation*}

With these variables, the absence of a rainbow copy of $H$ can be enforced by requiring at least two edges of each copy $\tilde{H}$ of $H$ in $K_n$ to be of the same color:
\begin{equation*}
	\sum\limits_{\substack{e,f \in E(\tilde{H})}} y_{e,f}  \geq 1. 
\end{equation*}

We now detail how we forbid orderable copies of graph $G$ depending on $G$ in the cases where $G$ is a complete graph or a cycle.

A colored cycle is orderable if and only if it has one vertex with two incident edges of the same color, as shown in Figure~\ref{fig:C6_orderable_colorings}.

Hence, forbidding canonical colorings of any cycle is equivalent to requiring proper edge coloring (if the graph has at least as many vertices as the cycle).

This can be expressed by fixing \begin{equation*}
    y_{ij,ik} = 0
\end{equation*}
for all vertices $i,j,k \in \nint$.

\begin{figure}[h]
\centering
\begin{subfigure}[b]{.49\linewidth}
\center
\begin{tikzpicture}[scale=0.6]
\foreach \i in {0,...,5}{
\draw (60*\i:1) coordinate(\i);
}

\draw[thick](0)--(1)--(2)--(3)--(4)--(5)-- cycle;
\draw[line width=2pt, BrickRed, line cap=round] (2) -- (3) -- (4);

\foreach \i in {0,...,5}{
\draw (\i) node[vertex_u]{};
}

\end{tikzpicture}

\end{subfigure}
\hspace{-3cm}
\begin{subfigure}[b]{.49\linewidth}
\center
\begin{tikzpicture}[scale=0.8]
\draw[ultra thick, BrickRed](0,0) to[bend left=30] (1,0);
\draw[thick] (1,0) to[bend left=30] (2,0);
\draw[thick] (2,0) to[bend left=30] (3,0);
\draw[thick] (3,0) to[bend left=30] (4,0);
\draw[thick] (4,0) to[bend left=30] (5,0);
\draw[ultra thick, BrickRed] (0,0) to[bend left=30] (5,0);

\foreach \i in {0,...,5} {
\draw (\i,0) node[vertex_u]{};
}
\end{tikzpicture}
\end{subfigure}
\caption{An orderable coloring of $C_6$. The black edges can be any color.}
\label{fig:C6_orderable_colorings}
\end{figure}
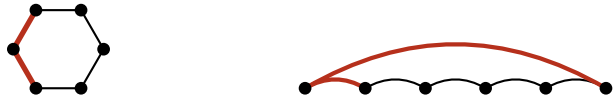

\paragraph{}
As mentioned in the previous section, the complete graph $G=K_4$ is orderable if and only if it contains one vertex $i$ with all three incident edges of the same color and another vertex $j$ whose two incident edges are different from $\edge ij$ have the same color (see Figure~\ref{fig:ordK4}). We can express this constraint as 
\begin{equation*}
	y_{ij, ik} + y_{ij, i\ell} + y_{ik, i\ell} + y_{jk,j\ell} \leq 3\qquad \text{for all pairwise-distinct } i,j,k,\ell \in \nint.
\end{equation*}

This constraint enforces that, in the clique formed by vertices $\{i,j,k,\ell\}$, vertex $i$ can have three incident edges of the same color if and only if $j$ does not have its other two incident edges of the same color. Enforcing this for all pairwise-distinct $i,j,k,\ell$ ensures that there is no orderable clique in the graph.

Likewise, a coloring of $K_p$ is orderable if and only if it has one vertex with $n$ incident edges of the same color, another vertex with $(n-1)$ incident edges of the same color, another one with $(n-2)$ incident edges of the same color, and so on. More orderability constraints for bipartite graphs are detailed in Appendix~\ref{app:constraintOrderabilityBipartite}.

For $G=C_3$ we obtain the following set of constraints
\begin{align}
& & y_{e,f} - y_{e,g} -y_{f,g}&\geq  - 1  \quad && \forall e,f,g \in \EKn, \nonumber\\ 
&&     \sum\limits_{j,k\in \nint\setminus\{i\}} y_{\edge ij,\edge ik}  &= 0 
 \quad && \forall i \in \nint,  \label{ct:clb_ordTriangle}\\
&&  \sum\limits_{\substack{e,f \in E(\tilde{H})}} y_{e,f}  &\geq 1&& \forall \ \tilde{H} \cong H  \subseteq K_n, \nonumber\\
&& \qquad \quad y_{e,f} \in \ &\{0,1\} && \forall e,f \in \EKn. \nonumber
\end{align}

To forbid other graphs $G$ to be orderable, we replace constraint~\eqref{ct:clb_ordTriangle} with one of the other constraints described above or in the Appendix~\ref{app:constraintOrderabilityBipartite}.

Compared to Formulation~\eqref{eq:formulationUCR1}, the number of variables grows from order $n^2$ to $n^4$ and the number of constraints remains exponential. However, the colorblind formulation has the advantage of avoiding the bottleneck of the previous formulation, which lies in computing all possible rainbow colorings of a clique of size $t$. Though this formulation still does not allow us to obtain bounds for large values of $n$, especially as $t$ increases, it provides more results, and in less time, than the non-colorblind one.

\subsection{Integer linear formulation for ordered Ramsey numbers}
\label{ssec:ILPorderedRamsey}

Here we introduce an ILP formulation that, for given $n$ and $G$, finds a $2$-coloring of $K_n$ that does not contain a monochromatic copy of $G$ when it exists, implying that $ \ordRam{\G} \geq n+1$. If this program is not feasible, we obtain that $\ordRam{G} \leq n$.

To this end, we model a red-blue coloring of the edges of $K_n$ by introducing a vector of variables~$x \in \set{0,1}^{\EKn}$, where, for every edge $e \in \EKn$,

\begin{equation*}
	x_e = 	
	\begin{cases}
		1 \quad \text{ if } e \text{ has color red,} \\
		0 \quad \text{ if } e \text{ has color blue.}
	\end{cases}
\end{equation*}

We then formulate the following set of linear constraints
\begin{subequations}
	\label{eq:ORN_ILP}
	\begin{alignat}{3}
 && \sum\limits_{e \in  E(\tilde{G})} x_{e} & \geq 1 \quad && \forall \ \tilde{G} \cong G, \label{ct:notallblue}\\
  && \sum\limits_{e \in  E(\tilde{G})} x_{e} & \leq \abs{E(G)}-1 \quad && \forall \ \tilde{G} \cong G, \label{ct:notallred} \\
	&& x_{e} \in &\set{0,1} && \forall e \in  \EKn,
	\end{alignat}
\end{subequations}
in which constraint~\eqref{ct:notallblue} (resp.~\eqref{ct:notallred}) forbid edges of a copy of $G$ in $K_n$ to be all blue (resp. red).

The number of constraints of this formulation remains exponential in $\abs{G}$, 

preventing us from obtaining bounds for larger graphs. However, it is efficient in computing bounds for small graphs, as shown in Section~\ref{ssec:boundsORN}.

\subsection{Tabu search}
\label{ssec:tabu}

We implemented a tabu heuristic similar to what was done by \Lidicky{}, McKinley, and Pfender~\cite{Lidicky2024} to construct feasible graphs for Ramsey numbers.
The goal of this method is to construct a coloring of $K_n$ heuristically without the forbidden patterns. 

The idea is the following: we implement a \emph{scoring function}, which returns how far we are from a Ramsey graph. In practice, the scoring function simply counts the number of copies of forbidden graphs appearing in a given graph. Then, starting from a random coloring, we iteratively recolor a single edge. Here, we pick the edge that leads to the largest improvement in the scoring function. To 

avoid getting stuck in a local minimum,
the \emph{tabu list} comes into play, preventing the algorithm from visiting graphs more than once. This means that in some steps, the score of the graph will first get worse before it gets better. Here, we only store the hashes of the graphs in the tabu list. We stop the algorithm when the score of the graph reaches zero, i.e., it does not contain any forbidden graphs. 

We can speed up this approach by computing an upper bound on the number of colors. This can be done by maximizing the number of colors in a small Ramsey graph using an ILP, as explained in Section~\ref{ssec:ILPunorderedCanonical}.

This approach was needed to obtain the sharp lower bound $\CR{3,5}\geq 13$.

\subsection{Lower bounds by complete enumeration}
\label{ssec:enumeration}

One can also try to generate all Ramsey graphs of size $n$ comprehensively. We start by generating all Ramsey graphs for $n=1$, that is, only the graph on one vertex. We then proceed iteratively. For each $n \geq 1$, let us denote by $\set{G_1, \ldots, G_m}$ the set of feasibly colored graphs of size $n$ up to isomorphism. From each $G_i$, we construct a graph of size $n+1$ by attaching a new vertex $n+1$. We then consider all the possible colorings of the edges incident to the vertex $n+1$ and keep only the ones that are feasible. The union of these graphs obtained from each $G_i$ contains the set of all feasible colorings of $K_{n+1}$ up to isomorphism. Keeping only a set of representatives of these graphs, we repeat this iteratively until we reach a size where no Ramsey graphs can be found. The limitation of this method clearly lies in the fact that the number of Ramsey graphs may first become extremely large before becoming smaller again, and at some point, it may become practically impossible to compute the next iteration of the algorithm. 

This is, for instance, how we compute the sharp lower bound $\CR{6,3}\geq 26$, as well as many of the smaller lower bounds seen in Tables~\ref{tab:boundsOrdCanRamseyNumbers} and~\ref{tab:boundsOrdRamseyNumbers}.

For $\CR{6,3}$, it was not possible to enumerate all Ramsey graphs up to order 25 directly. First, we used flag algebras calculation on 6 vertices to show that $\CR{6,3} \leq 26$. Since the calculation seemed sharp, we inspected the solution and noticed that monochromatic triangles were missing as well as six other colored graphs of order 6.

After forbidding these 7 graphs in addition to orderable $K_6$ and rainbow $C_3$, 
we were able to enumerate the graphs up to order 25 and found some graphs on 25 vertices.   
See Table~\ref{tab:nbFeasibleGraphsCR63}, where give the number of these graphs for for each size of graph $n$. The enumeration implies $\CR{6,3} \geq 26$.
This demonstrates how even a numerical solution using flag algebras can help with direct enumeration.

\begin{table}[H]
\centering
\begin{tabular}{@{}lrrrrrrrrr@{}}
\toprule
$n$      & 1     & 2     & 3     & 4    & 5     & 6     & 7     & 8     & 9   \\ 
\#graphs & 1     & 1     & 1     & 4    & 9     & 26    & 59    & 164   & 376 \\ \midrule \midrule
$n$      & 10    & 11    & 12    & 13   & 14    & 15    & 16    & 17    &  18   \\ 
\#graphs & 901   & 1869  & 3727  & 6316 & 10482 & 15754 & 21319 & 25275 &  25269  \\ \midrule
\midrule
$n$       & 19    & 20    & 21   & 22    & 23    & 24    & 25    &  26&   \\ 
\#graphs  & 21901 & 15483 & 9433 & 4411  & 1795  & 476   & 127   &  0  & \\ \bottomrule
\end{tabular}
\caption{Number of graphs (up to isomorphism) used in enumeration for showing $\CR{6,3} = 26$.}
\label{tab:nbFeasibleGraphsCR63}
\end{table}

\section{Results}

We now present the bounds obtained through the methods introduced above for the three Ramsey variants considered. Throughout this section, we mark bounds in blue when they are sharp and otherwise provide the interval [lower bound, upper bound].

\subsection{Canonical Ramsey numbers}

In Table~\ref{tab:BoundsUCRNComplete}, we present the bounds we obtain for the (ordered) canonical Ramsey numbers $\ER{G}$ for all ordered graphs up to $4$ vertices without isolated vertices, up to reflection. Indeed, the ordered Ramsey number of the reflection of a graph $\G$, that is, the graph with the reverse ordering, is equal to the one of $\G$.

All sharp upper bounds are obtained through complete enumeration or infeasibility of the corresponding ILP formulation,

and other upper bounds are obtained using flag algebras. All computations were done on $5$ vertices except the upper bound for $K_4$, where the SDP was too large to be solved.

\begin{table}[H]
\centering
	\begin{tabular}{l|ccccccccc}\toprule
		 
		  $K_2$ & $\ramseybound{\Orderedtwo1}{2}{2}$ & \\\midrule
        
		$P_3$ & $\ramseybound{\Orderedthree011}{3}{3}$ &
		$\ramseybound{\Orderedthree101}{3}{3}$ & \\\midrule
		
        $C_3$ & $\ramseybound{\Orderedthree111}{4}{4}$ & \\\midrule
		
		$2K_2$ & $\ramseybound{\Orderedfour001100}{4}{4}$ & 
		$\ramseybound{\Orderedfour010010}{4}{4}$ & 
        $\ramseybound{\Orderedfour100001}{4}{4}$ & \\\midrule
		
		$P_4$ & $\ramseybound{\Orderedfour001101}{7}{7}$ & 
		$\ramseybound{\Orderedfour001110}{5}{5}$ & 
		$\ramseybound{\Orderedfour010011}{6}{6}$ & 
		$\ramseybound{\Orderedfour010110}{5}{5}$ & 
		$\ramseybound{\Orderedfour011010}{5}{5}$ & 
		$\ramseybound{\Orderedfour100011}{6}{6}$ & \\
		& $\ramseybound{\Orderedfour100101}{10}{10}$ &
        $\ramseybound{\Orderedfour101001}{6}{6}$ & \\\midrule
        
		$K_{1,3}$ & $\ramseybound{\Orderedfour001011}{6}{6}$ & 
		$\ramseybound{\Orderedfour010101}{5}{5}$ & \\\midrule
        
		Paw & $\ramseybound{\Orderedfour001111}{9}{9}$ &
		$\ramseybound{\Orderedfour010111}{12}{43}$ &
		$\ramseybound{\Orderedfour011011}{9}{9}$ &
		$\ramseybound{\Orderedfour011101}{11}{26}$ &
		$\ramseybound{\Orderedfour100111}{11}{17}$ &
		$\ramseybound{\Orderedfour101011}{10}{10}$ \\ \midrule
        
		$C_4$ & $\ramseybound{\Orderedfour011110}{9}{14}$ &
		$\ramseybound{\Orderedfour101101}{9}{37}$ &
		$\ramseybound{\Orderedfour110011}{11}{24}$\\\midrule
        
		$K_4^-$ & $\ramseybound{\Orderedfour011111}{15}{\infty}$ &
		$\ramseybound{\Orderedfour101111}{15}{5224}$   &
		$\ramseybound{\Orderedfour110111}{15}{\infty}$ &
		$\ramseybound{\Orderedfour111011}{13}{288}$   \\\midrule
		
        $K_4$ & $\ramseybound{\Orderedfour111111}{16}{\infty}$\\
	\bottomrule
	\end{tabular}
    \caption{Bounds for ordered canonical Ramsey numbers $\ER{G}$. }
    \label{tab:boundsOrdCanRamseyNumbers}
\end{table}

\subsection{Unordered canonical Ramsey numbers}

\paragraph*{Complete graphs.}

In Table~\ref{tab:BoundsUCRNComplete}, we present our results for the unordered canonical Ramsey numbers $\CR{s,t}$ for $3\leq s\leq 6$ and $3\leq t \leq 5$. We obtain exact results for six of these numbers and we improve the known upper bound on $\CR{4,4}$. The other numbers are still, for the moment, out of reach for our algorithms.

\begin{table}[H]
\centering

\begin{tabular}{l|ccc}
    \toprule
     ${}_{s}$ ${}^{t}$ & 3  & 4  &  5  \\ \midrule
      3   &    \sharpboundV{3}&    \sharpboundV{7}&  \sharpboundV{13}\\
      4   &    \sharpboundV{6}&  $[26,45]$  &   -  \\
      5   &  \sharpboundV{11} &  -  &  -   \\          
      6   & \sharpboundV{26}   &   - &  -   \\  \bottomrule        
\end{tabular}
\caption{Lower and upper bounds on unordered canonical Ramsey numbers $\CR{s,t}$ of complete graphs.
}
\label{tab:BoundsUCRNComplete}
\end{table}

These results were obtained in the following way: solving the ILP provided the exact values of $\CR{3,3}$, $\CR{3,4}$ and $\CR{4,3}$; as well as the lower bound on $\CR{5,3}$.
We found a construction for $\CR{3,5}$ on $12$ vertices with the tabu algorithm, providing us with the lower bound of $13$.
The construction of a graph on $25$ vertices for $\CR{6,3}$ was obtained through complete enumeration. The lower bound for $\CR{4,4}$ is the specific case of the bound proven by Richer~\cite{Richer2000} for all $s$ and $t$, as shown in Equation~\eqref{eq:boundsUCRN}.

The upper bounds for $\CR{3,5}$, $\CR{4,4}$, $\CR{5,3}$ and $\CR{6,3}$ were all obtained with the colorblind flag algebras method, on $7$, $6$, $5$ and $6$ vertices respectively.
The computation of the upper bound for $\CR{4,4}$ required 505,163 unlabeled flags in the program, requiring just below $2$TB of memory 

using \texttt{csdp}.

\paragraph*{Bipartite graphs.}

We now present our bounds on the unordered canonical Ramsey number $\CR{G,H}$ for bipartite graphs. A colored graph $G$ on $n$ vertices is orderable if and only if the restriction of $G$ to its vertices of degree at least $2$ is orderable. Hence, for any $G$, an edge-coloring of $K_n$, with $n>\abs{V(G)}$, does not contain an orderable $G$ if and only if it does not contain an orderable $G$ (recursively) removed of all its vertices of degree $1$. In our computations, we thus only consider the unordered canonical Ramsey numbers $\CR{G, H}$ for bipartite $G$ without vertices of degree $1$. Note that since we can apply this recursively, this implies, in particular, that any coloring of a tree is orderable. The graphs we examine are the even cycles $C_4$ and $C_6$, the complete bipartite graphs $\Kb23$, $\Kb23$, $\Kb24$ and $\Kb33$, the ladder graph on $6$ vertices $L_3$ and the complete bipartite graph with one edge removed $\Kbmin33$. They are represented in Figure~\ref{fig:bipartiteGraphs}.
All the lower bounds presented here are obtained by the colorblind ILP formulation of Section~\ref{ssec:ILPunorderedCanonical}; and the upper bounds are obtained via flag algebra computations. They are summarized in Table~\ref{tab:boundsUnorderedCanonicalBipartite}.

\begin{figure}[H]
\centering
\begin{subfigure}[b]{.13\linewidth}
\center
\begin{tikzpicture}[scale=0.5]
\foreach \x/\y in {0/0, 0/1, 1/1, 1/0  }{\draw (\x,\y) node[vertex_su]{};  }   
\draw[thick] (0,0) to (1,0);
\draw[thick] (1,0) to (1,1);
\draw[thick] (1,1) to (0,1);
\draw[thick] (0,1) to (0,0);
\end{tikzpicture}
\caption*{$C_4$}\label{fig:C4}
\end{subfigure}
\begin{subfigure}[b]{.13\linewidth}
\center
\begin{tikzpicture}[scale=0.5]
\foreach \x/\y in {0/0, 0/1, 1/-0.5, 1/0.5, 1/1.5  }{\draw (\x,\y) node[vertex_su]{};  }   
\draw[thick] (0,1) to (1,-0.5);
\draw[thick] (0,1) to (1,0.5);
\draw[thick] (0,1) to (1,1.5);
\draw[thick] (0,0) to (1,-0.5);
\draw[thick] (0,0) to (1,0.5);
\draw[thick] (0,0) to (1,1.5);
\end{tikzpicture}\caption*{$\Kb{2}{3}$}\label{fig:K23}
\end{subfigure}
\begin{subfigure}[b]{.13\linewidth}
\center
\begin{tikzpicture}[scale=0.5]
\foreach \x/\y in {0/0, 0.4/0.6, 0.4/-0.6, 1/0.6, 1/-0.6, 1.4/0  }{\draw (\x,\y) node[vertex_su]{};  }   
\draw[thick] (0,0) to (0.4,0.6);
\draw[thick] (0,0) to (0.4,-0.6);
\draw[thick] (0.4,0.6) to (1,0.6);
\draw[thick] (0.4,-0.6) to (1,-0.6);
\draw[thick] (1,0.6) to (1.4,0);
\draw[thick] (1,-0.6) to (1.4,0);
\end{tikzpicture}\caption*{$C_6$}\label{fig:C6}
\end{subfigure}
\begin{subfigure}[b]{.13\linewidth}
\center
\begin{tikzpicture}[scale=0.5]
\foreach \x/\y in {0/0, 1/0, 1/1, 0/1, 2/0, 2/1  }{\draw (\x,\y) node[vertex_su]{};  }   
\draw[thick] (0,0) to (1,0);
\draw[thick] (1,0) to (1,1);
\draw[thick] (1,1) to (0,1);
\draw[thick] (0,1) to (0,0);
\draw[thick] (1,0) to (2,0);
\draw[thick] (2,0) to (2,1);
\draw[thick] (2,1) to (1,1);
\end{tikzpicture}\caption*{$L_3$}\label{fig:L3}
\end{subfigure}
\begin{subfigure}[b]{.13\linewidth}
\center
\begin{tikzpicture}[scale=0.5]
\foreach \x/\y in {0/0, 0/1, 0/2, 1/0, 1/1, 1/2  }{\draw (\x,\y) node[vertex_su]{};  }   
\draw[thick] (0,2) to (1,2);
\draw[thick] (0,2) to (1,1);
\draw[thick] (0,2) to (1,0);
\draw[thick] (0,1) to (1,2);
\draw[thick] (0,1) to (1,1);
\draw[thick] (0,1) to (1,0);
\draw[thick] (0,0) to (1,2);
\draw[thick] (0,0) to (1,1);

\end{tikzpicture}\caption*{$\Kbmin{3}{3}$}\label{fig:K33min}
\end{subfigure}
\begin{subfigure}[b]{.13\linewidth}
\center
\begin{tikzpicture}[scale=0.5]
\foreach \x/\y in {0/2, 0/1,  1/2.5, 1/1.85, 1/0.5,1/1.15  }{\draw (\x,\y) node[vertex_su]{};  }   

\draw[thick] (0,2) to (1,2.5);
\draw[thick] (0,2) to (1,1.85);
\draw[thick] (0,2) to (1,1.15);
\draw[thick] (0,2) to (1,0.5);
\draw[thick] (0,1) to (1,2.5);
\draw[thick] (0,1) to (1,1.85);
\draw[thick] (0,1) to (1,1.15);
\draw[thick] (0,1) to (1,0.5);
\end{tikzpicture}\caption*{$\Kb{2}{4}$}\label{fig:K24}
\end{subfigure}
\begin{subfigure}[b]{.13\linewidth}
\center
\begin{tikzpicture}[scale=0.5]
\foreach \x/\y in {0/0, 0/1, 0/2, 1/0, 1/1, 1/2  }{\draw (\x,\y) node[vertex_su]{};  }   
\draw[thick] (0,2) to (1,2);
\draw[thick] (0,2) to (1,1);
\draw[thick] (0,2) to (1,0);
\draw[thick] (0,1) to (1,2);
\draw[thick] (0,1) to (1,1);
\draw[thick] (0,1) to (1,0);
\draw[thick] (0,0) to (1,2);
\draw[thick] (0,0) to (1,1);
\draw[thick] (0,0) to (1,0);
\end{tikzpicture}\caption*{$\Kb{3}{3}$}\label{fig:K33}
\end{subfigure}
\caption{The bipartite graphs considered.}
\label{fig:bipartiteGraphs}
\end{figure}
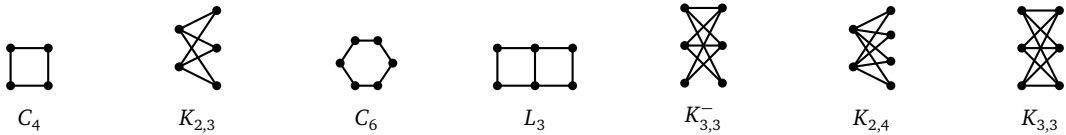

\begin{table}[H]
\centering
	\begin{tabular}{l|ccccccc}\toprule
		 ${}_{G}$ ${}^{H}$& $C_4$& $\Kb{2}{3}$ & $C_6$ & $L_3$ & $\Kbmin{3}{3}$ &$\Kb{2}{4}$ &  $\Kb{3}{3}$\\\midrule
		$C_4$ & \sharpboundV{5 }& \sharpboundV{7 }& \sharpboundV{9 }& \sharpboundV{9 }& \sharpboundV{10} & \sharpboundV{10} & $[11,13]$\\
		$\Kb{2}{3}$ & \sharpboundV{7 }& \sharpboundV{9 }& \sharpbound{9 }& \sharpbound{9 }& $[\boundV{11},12]$ & $[11,13]$ &  $[11,12]$\\
		$C_6$ & \sharpboundV{6 } & \sharpboundV{7 }& \sharpboundV{9 }& \sharpboundV{9 }& \sharpboundV{10} & \sharpboundV{10} & $[11,13]$\\
		$L_3$ & \sharpboundV{6 }& \sharpboundV{8 }& \sharpboundV{10} & \sharpbound{10} & \sharpbound{11} & $[\boundV{11},12]$ & $[\boundV{12},15]$\\
		$\Kbmin{3}{3}$ & \sharpboundV{7 }& $\sharpboundV{10}$ & $\sharpboundV{10}$ & $[\boundV{10},11]$ & $[10,\infty)$ & $[\boundV{11},\infty)$ &       $[\boundV{12},\infty)$\\
		$\Kb{2}{4}$ & \sharpboundV{10} & $[10,\infty)$ & $[10,\infty)$ & $[\boundV{12},\infty)$ & $[\boundV{12},\infty)$ & $[12,\infty)$&          $[\boundV{13},\infty)$\\
		 $\Kb{3}{3}$ & \sharpbound{10} & $[\boundV{12},\infty)$ & $[11,\infty)$ & $[11,\infty)$ & $[\boundV{13},\infty)$ & $[\boundV{12},\infty)$&          $[\boundV{14},\infty)$ \\
	\bottomrule
	\end{tabular}
    \caption{Bounds on $\CR{G,H}$ for bipartite.}
    \label{tab:boundsUnorderedCanonicalBipartite}
\end{table}

\subsection{Ordered Ramsey numbers}
\label{ssec:boundsORN}

In Table~\ref{tab:boundsOrdRamseyNumbers}, we display the bounds obtained by the flag algebras on the ordered Ramsey numbers $\ordRam{\G}$ of all graphs of size up to $4$, up to reflection. The lower bounds shown here are the known exact Ramsey numbers, proven in~\cite{Balko2015} and~\cite{Overman2018}.

\begin{table}[H]
\centering
	\begin{tabular}{l|ccccccccc}\toprule
		$K_2$ & 
        $\ramseybound{\Orderedtwo1}{2}{2}$\\\midrule
		$P_3$ & 
        $\ramseybound{\Orderedthree011}{4}{4}$ & 
        $\ramseybound{\Orderedthree101}{5}{5}$\\\midrule
		$C_3$ & 
        $\ramseybound{\Orderedthree111}{6}{6}$\\\midrule
		$2K_2$ & 
        $\ramseybound{\Orderedfour001100}{6}{6}$ & 
        $\ramseybound{\Orderedfour010010}{5}{5}$ & 
        $\ramseybound{\Orderedfour100001}{6}{6}$\\\midrule
		$P_4$ & 
        $\ramseybound{\Orderedfour001101}{9}{9}$ & 
        $\ramseybound{\Orderedfour001110}{7}{7}$ & 
        $\ramseybound{\Orderedfour010011}{9}{9}$ & 
        $\ramseybound{\Orderedfour010110}{7}{7}$ & 
        $\ramseybound{\Orderedfour011010}{7}{7}$ & 
        $\ramseybound{\Orderedfour100011}{9}{9}$ \\ & 
        $\ramseybound{\Orderedfour100101}{10}{10}$ & 
        $\ramseybound{\Orderedfour101001}{10}{10}$\\\midrule
		$K_{1,3}$ & 
        $\ramseybound{\Orderedfour001011}{6}{6}$ & 
        $\ramseybound{\Orderedfour010101}{9}{9}$\\\midrule
		Paw & 
        $\ramseybound{\Orderedfour001111}{10}{10}$ & 
        $\ramseybound{\Orderedfour010111}{10}{10}$ & 
        $\ramseybound{\Orderedfour011011}{10}{10}$ & 
        $\ramseybound{\Orderedfour011101}{11}{11}$ & 
        $\ramseybound{\Orderedfour100111}{11}{11}$ & 
        $\ramseybound{\Orderedfour101011}{10}{10}$\\\midrule
		$C_4$ & 
        $\ramseybound{\Orderedfour011110}{10}{10}$ & 
        $\ramseybound{\Orderedfour101101}{14}{14}$ & 
        $\ramseybound{\Orderedfour110011}{11}{11}$\\\midrule 
		$K_4^-$ & 
        $\ramseybound{\Orderedfour011111}{12}{12}$ & 
        $\ramseybound{\Orderedfour101111}{14}{17}$ & 
        $\ramseybound{\Orderedfour110111}{15}{16}$ & 
        $\ramseybound{\Orderedfour111011}{13}{14}$\\\midrule
		$K_4$ & 
        $\ramseybound{\Orderedfour111111}{18}{18}$\\ 
	\bottomrule
	\end{tabular}
    \caption{Bounds for ordered Ramsey numbers $\ordRam{G}$.}
    \label{tab:boundsOrdRamseyNumbers}
\end{table}

We show in Table~\ref{fig:ordRam5} our bounds on the ordered Ramsey numbers $\ordRam{\G}$ on some selected ordered graphs of size $5$. The second and third columns show the bounds obtained with the ILP formulation introduced in Section~\ref{ssec:ILPorderedRamsey}. The last two columns display the upper bounds obtained using the ordered variant of the flag algebra method described in Section~\ref{ssec:orderedFA} on $5$ and $6$ vertices, respectively.

\begin{table}[H]
\centering
\begin{tabular}{cccccc}
\toprule
 & & & & UB FA & UB FA   \\ 
ID &$\G$ & LB ILP & UB ILP & $n = 5$ & $n = 6$  \\ \midrule
1 &
\begin{tikzpicture}[scale=0.5]
\foreach \x in {0,1,2,3,4}{\draw (\x,0) node[vertex_su]{};}
\draw[thick] (0,0) to[bend left=50] (1,0);
\draw[thick] (0,0) to[bend left=50] (2,0);
\draw[thick] (1,0) to[bend right=50] (3,0);
\draw[thick] (2,0) to[bend left=50] (4,0);
\end{tikzpicture}
 & \sharpbound{14} & \sharpbound{14} & 27 & 19 \\ \hline
 2 & 
\begin{tikzpicture}[scale=0.5]
\foreach \x in {0,1,2,3,4}{\draw (\x,0) node[vertex_su]{};}
\draw[thick] (0,0) to[bend left=50] (1,0);
\draw[thick] (0,0) to[bend left=50] (2,0);
\draw[thick] (1,0) to[bend right=50] (3,0);
\draw[thick] (3,0) to[bend left=50] (4,0);
\end{tikzpicture}
 & \sharpbound{16} & \sharpbound{16} & 45 & 25
\\ \hline
3 &
\begin{tikzpicture}[scale=0.5]
\foreach \x in {0,1,2,3,4}{\draw (\x,0) node[vertex_su]{};}
\draw[thick] (0,0) to[bend left=50] (1,0);
\draw[thick] (0,0) to[bend left=50] (2,0);
\draw[thick] (1,0) to[bend right=50] (4,0);
\draw[thick] (2,0) to[bend left=50] (3,0);
\end{tikzpicture}
 & \sharpbound{15} & \sharpbound{15} & 32 & 20
\\ \hline
4 &
\begin{tikzpicture}[scale=0.5]
\foreach \x in {0,1,2,3,4}{\draw (\x,0) node[vertex_su]{};}
\draw[thick] (0,0) to[bend left=50] (2,0);
\draw[thick] (0,0) to[bend left=50] (4,0);
\draw[thick] (1,0) to[bend right=50] (3,0);
\draw[thick] (1,0) to[bend right=50] (4,0);
\draw[thick] (2,0) to[bend left=50] (3,0);
\end{tikzpicture}
 & \sharpbound{14} & \sharpbound{14} & 31 & 29
\\ \hline
5 &
\begin{tikzpicture}[scale=0.5]
\foreach \x in {0,1,2,3,4}{\draw (\x,0) node[vertex_su]{};}
\draw[thick] (0,0) to[bend left=50] (1,0);
\draw[thick] (0,0) to[bend left=50] (2,0);
\draw[thick] (1,0) to[bend right=50] (3,0);
\draw[thick] (2,0) to[bend left=50] (4,0);
\draw[thick] (3,0) to[bend right=50] (4,0);
\end{tikzpicture}
 & 19 & - & 50 & 30
\\ \hline
6 &
\begin{tikzpicture}[scale=0.5]
\foreach \x in {0,1,2,3,4}{\draw (\x,0) node[vertex_su]{};}
\draw[thick] (0,0) to[bend left=50] (1,0);
\draw[thick] (0,0) to[bend left=50] (2,0);
\draw[thick] (1,0) to[bend right=50] (2,0);
\draw[thick] (1,0) to[bend right=50] (4,0);
\draw[thick] (2,0) to[bend left=50] (3,0);
\draw[thick] (3,0) to[bend left=50] (4,0);
\end{tikzpicture}
& 24 & - & 
- 
& 33
\\ \hline
7 &
\begin{tikzpicture}[scale=0.5]
\foreach \x in {0,1,2,3,4}{\draw (\x,0) node[vertex_su]{};}
\draw[thick] (0,0) to[bend left=50] (1,0);
\draw[thick] (1,0) to[bend right=50] (2,0);
\draw[thick] (1,0) to[bend right=50] (3,0);
\draw[thick] (1,0) to[bend right=50] (4,0);
\end{tikzpicture}
 & \sharpbound{13} & \sharpbound{13} & 21 & 16
\\ \hline
8 &
\begin{tikzpicture}[scale=0.5]
\foreach \x in {0,1,2,3,4}{\draw (\x,0) node[vertex_su]{};}
\draw[thick] (2,0) to[bend left=50] (0,0);
\draw[thick] (2,0) to[bend left=50] (1,0);
\draw[thick] (2,0) to[bend left=50] (3,0);
\draw[thick] (2,0) to[bend left=50] (4,0);
\end{tikzpicture}
 & \sharpbound{15} & \sharpbound{15} & 26 & 19
\\ \hline
9 &
\begin{tikzpicture}[scale=0.5]
\foreach \x in {0,1,2,3,4}{\draw (\x,0) node[vertex_su]{};}
\draw[thick] (0,0) to[bend left=50] (1,0);
\draw[thick] (0,0) to[bend left=50] (2,0);
\draw[thick] (0,0) to[bend left=50] (3,0);
\draw[thick] (3,0) to[bend left=50] (4,0);
\end{tikzpicture}
 & \sharpbound{13} & \sharpbound{13} & 21 & 16
\\ \hline
10 & 
\begin{tikzpicture}[scale=0.5]
\foreach \x in {0,1,2,3,4}{\draw (\x,0) node[vertex_su]{};}
\draw[thick] (0,0) to[bend left=50] (1,0);
\draw[thick] (0,0) to[bend left=50] (2,0);
\draw[thick] (0,0) to[bend left=50] (3,0);
\draw[thick] (2,0) to[bend left=50] (4,0);
\end{tikzpicture}
 & \sharpbound{12} & \sharpbound{12} & 22 & 16 \\ \bottomrule
\end{tabular}
\caption{Lower and upper bounds on the ordered Ramsey numbers of certain graphs of size $5$. ID is used to identify the graphs in the code repository.
}\label{fig:ordRam5}
\end{table}

\subsection{Implementation}

We used the programming language Julia to implement the lower bounds.
To compute the upper bounds based on flag algebras, we used the Julia package FlagSOS~\cite{Brosch2022} to generate the lists of Ramsey graphs up to isomorphism, which were then used by a further developed version of the flag algebra software developed by Lidick\'y and Pfender~\cite{Lidicky2021} to generate the list of flags, compute the products of flags, and build the final SDP.
The SDPs were solved using either Mosek~\cite{mosek} or CSDP~\cite{Borchers1999}, on an AMD EPYC $9474$F $48$-Core Processor with $1536$GB RAM, and on the Alderaan cluster at the University of Colorado Denver. The cluster was funded by an NSF grant~$2019089$ CC* Compute: Accelerating Science and Education by Campus and Grid Computing.
The ILP formulations were solved using Gurobi~\cite{gurobi} version $10.0.0$ with JuMP~\cite{Lubin2023} on an AMD EPYC $7532$ $32$-Core Processor with $1024$GB RAM.
Results of our calculations and programs to perform them are available at \url{https://github.com/DanielBrosch/Ramsey}.

\section{Conclusion}

By combining the flag algebra method, which uses colorblind and/or ordered flag algebras, with algorithms for constructing Ramsey graphs, we can efficiently compute numerous lower and upper bounds in an automated manner. We recovered some known Ramsey numbers but also obtained plenty of new bounds.
When the resulting bound is small, ILP methods are very efficient. 
For larger values, the combination of tabu search and flag algebras seemed to work better to obtain bounds.

\printbibliography

\newpage

\appendix 

\section{Appendix: Criteria on orderability of bipartite graphs}
\label{app:constraintOrderabilityBipartite}

Here, we detail the constraints we enforce in the colorblind ILP formulation~\eqref{eq:ORN_ILP} to forbid the presence of an orderable graph $G$ in an edge-coloring of $K_n$ when $G$ is one of the bipartite graphs $\Kb23, L_3, \Kbmin33, \Kb24$ and $\Kb33$. The conditions on the orderability of the graphs were established by considering all possible orderings of each graph and categorizing all orderable colorings into general cases.

For each edge-colored graph $G$, we detail the subgraphs that make $G$ orderable. We represent each of these subgraphs and the orderable $G$ it induces, drawn in its usual and ordered forms. We then express the linear constraint forbidding this subgraph. In each of the drawings, edge color classes are represented by different colors, though they may be combined, and black edges can be of any color.

 To simplify notation, we extend the vector of $y$-variables introduced in Section~\ref{ssec:ILPunorderedCanonical} to also include variables $y_{ab,cd}$ for $a=b$, for $c=d$ and for $\edge ab = \edge cd$, and we set all these variables to $0$.

\subsection{Complete bipartite graph $\Kb23$}

\begin{mdframed}
    \begin{figure}[H]
    \center
   \raisebox{1em}{\begin{tikzpicture}[scale=0.7]
    \draw[ultra thick, BrickRed](1,0) to (1,1);
    \draw[ultra thick, BrickRed](1,1) to (0,1.6);
    \draw[ultra thick, BrickRed] (1,1) to (2,1.6);
   \foreach \x/\y in {1/0, 1/1, 0/1.6, 2/1.6}{\draw (\x,\y) node[vertex_u]{};  }   
    \end{tikzpicture}}
    \hspace{1.5cm}
    \begin{tikzpicture}[scale=0.8]
  
    \draw[ultra thick, BrickRed] (0,1) to (1,-0.5);
    \draw[ultra thick, BrickRed] (0,1) to (1,0.5);
    \draw[ultra thick, BrickRed] (0,1) to (1,1.5);
    \draw[thick] (0,0) to (1,-0.5);
    \draw[thick] (0,0) to (1,0.5);
    \draw[thick] (0,0) to (1,1.5);
    \draw[color=black] (0,1) node[align=center,   left=1.2pt] {$a_1$};
    \draw[color=black] (0,0) node[align=center,   left=1.2pt] {$b_1$};
    \draw[color=black] (1,1.5) node[align=center,   right=1.2pt] {$a_2$};
    \draw[color=black] (1,0.5) node[align=center,   right=1.2pt] {$b_2$};
    \draw[color=black] (1,-0.5) node[align=center,   right=1.2pt] {$c_2$};
   \foreach \x/\y in {0/0, 0/1, 1/-0.5, 1/0.5, 1/1.5}{\draw (\x,\y) node[vertex_u]{};  }    
    \end{tikzpicture}
    
    \hspace{1.5cm}

    \begin{tikzpicture}[scale=0.85]
    \draw[ultra thick, BrickRed](0,0) to[bend left=25] (1,0);
    \draw[ultra thick, BrickRed](0,0) to[bend left=40] (2,0);
    \draw[ultra thick, BrickRed] (0,0) to[bend left=50] (3,0);
    \draw[thick] (4,0) to[bend right=50] (1,0);
    \draw[thick] (4,0) to[bend right=40] (2,0);
    \draw[thick] (4,0) to[bend right=25] (3,0);
    \draw[color=black] (0,0) node[align=center, text height=5pt,   below=1.2pt] {$a_1$};
    \draw[color=black] (1,0) node[align=center, text height=5pt,   below=1.2pt] {$a_2$};
    \draw[color=black] (2,0) node[align=center, text height=5pt,   below=1.2pt] {$b_2$};
    \draw[color=black] (3,0) node[align=center, text height=5pt,   below=1.2pt] {$c_2$};
    \draw[color=black] (4,0) node[align=center, text height=5pt,   below=1.2pt] {$b_1$};
    \foreach \x in {0,1,2,3,4}{\draw (\x,0) node[vertex_u]{};}    
    \end{tikzpicture}

    \end{figure}
    \vspace{-.5cm}

    \begin{equation*}
        \sum\limits_{b \notin\set{a_1,a_2} } y_{a_1a_2,a_1b}\leq 1 \qquad \forall a_1,a_2 \in \nint
    \end{equation*} 
    \end{mdframed}

\begin{mdframed}
    \begin{figure}[H]
    \center
    \raisebox{2em}{\begin{tikzpicture}[scale=0.7]
    \draw[ultra thick, BrickRed](0,0) to (1,0.6);
    \draw[ultra thick, BrickRed](0,0) to (1,-0.6);
    \draw[ultra thick, NavyBlue](2,0) to (1,0.6);
    \draw[ultra thick, NavyBlue](2,0) to (1,-0.6);
   \foreach \x/\y in {0/0, 1/0.6, 1/-0.6, 2/0}{\draw (\x,\y) node[vertex_u]{};  }         
    \end{tikzpicture}}
    \hspace{1.5cm}
    \begin{tikzpicture}[scale=0.8]
    \draw[thick] (0,1) to (1,-0.5);
    \draw[ultra thick, NavyBlue] (0,1) to (1,0.5);
    \draw[ultra thick, BrickRed] (0,1) to (1,1.5);
    \draw[thick] (0,0) to (1,-0.5);
    \draw[ultra thick, NavyBlue] (0,0) to (1,0.5);
    \draw[ultra thick, BrickRed] (0,0) to (1,1.5);
    \draw[color=black] (0,1) node[align=center,   left=1.2pt] {$a_1$};
    \draw[color=black] (0,0) node[align=center,   left=1.2pt] {$b_1$};
    \draw[color=black] (1,1.5) node[align=center,   right=1.2pt] {$a_2$};
    \draw[color=black] (1,0.5) node[align=center,   right=1.2pt] {$b_2$};
    \draw[color=black] (1,-0.5) node[align=center,   right=1.2pt] {$c_2$};
   \foreach \x/\y in {0/0, 0/1, 1/-0.5, 1/0.5, 1/1.5}{\draw (\x,\y) node[vertex_u]{};  }           
    \end{tikzpicture}

    \hspace{1.5cm}
    \begin{tikzpicture}[scale=.85]
    \draw[ultra thick, BrickRed](0,0) to[bend left=40] (2,0);
    \draw[ultra thick, BrickRed](0,0) to[bend left=50] (3,0);
    \draw[ultra thick, NavyBlue](1,0) to[bend left=25] (2,0);
    \draw[ultra thick, NavyBlue](1,0) to[bend left=40] (3,0);
    \draw[thick] (3,0) to[bend left=25] (4,0);
    \draw[thick] (2,0) to[bend left=40] (4,0);
    \draw[color=black] (0,0) node[align=center, text height=5pt,   below=1.2pt] {$a_2$};
    \draw[color=black] (1,0) node[align=center, text height=5pt,   below=1.2pt] {$b_2$};
    \draw[color=black] (2,0) node[align=center, text height=5pt,   below=1.2pt] {$a_1$};
    \draw[color=black] (3,0) node[align=center, text height=5pt,   below=1.2pt] {$b_1$};
    \draw[color=black] (4,0) node[align=center, text height=5pt,   below=1.2pt] {$c_2$};
    \foreach \x in {0,1,2,3,4}{\draw (\x,0) node[vertex_u]{};}        
    \end{tikzpicture}
    
    \end{figure}
    \vspace{-.8cm}
    
    \begin{align*}
        & y_{a_2a_1,a_2b_1} + y_{b_2a_1,b_2b_1} \leq 1 \qquad \forallpwd a_1,b_1, a_2, b_2 \in \nint
    \end{align*}
    \end{mdframed}

    \begin{mdframed}
        \begin{figure}[H]
        \center
        \raisebox{1.75em}{\begin{tikzpicture}[scale=0.7]
        \draw[ultra thick, BrickRed](0,0) to (1,0.5);
        \draw[ultra thick, BrickRed](0,0) to (1,-0.5);
        \draw[ultra thick, NavyBlue](1,0.5) to (2,1);
        \draw[ultra thick, NavyBlue](1,0.5) to (2,0);
   \foreach \x/\y in {0/0, 1/0.5, 1/-0.5, 2/0, 2/1}{\draw (\x,\y) node[vertex_u]{};  }                
        \end{tikzpicture}}
        \hspace{1.5cm}
        \begin{tikzpicture}[scale=0.8]
        \draw[ultra thick, NavyBlue] (0,1) to (1,-0.5);
        \draw[ultra thick, NavyBlue] (0,1) to (1,0.5);
        \draw[ultra thick, BrickRed] (0,1) to (1,1.5);
        \draw[thick] (0,0) to (1,-0.5);
        \draw[thick] (0,0) to (1,0.5);
        \draw[ultra thick, BrickRed] (0,0) to (1,1.5);
        \draw[color=black] (0,1) node[align=center,   left=1.2pt] {$a_1$};
        \draw[color=black] (0,0) node[align=center,   left=1.2pt] {$b_1$};
        \draw[color=black] (1,1.5) node[align=center,   right=1.2pt] {$a_2$};
        \draw[color=black] (1,0.5) node[align=center,   right=1.2pt] {$b_2$};
        \draw[color=black] (1,-0.5) node[align=center,   right=1.2pt] {$c_2$};
   \foreach \x/\y in {0/0, 0/1, 1/-0.5, 1/0.5, 1/1.5}{\draw (\x,\y) node[vertex_u]{};  }                   
        \end{tikzpicture}

        \hspace{1.5cm}

        \begin{tikzpicture}[scale=.85]
        \draw[ultra thick, BrickRed](0,0) to[bend left=25] (1,0);
        \draw[ultra thick, BrickRed](0,0) to[bend left=40] (2,0);
        \draw[ultra thick, NavyBlue](1,0) to[bend left=40] (3,0);
        \draw[ultra thick, NavyBlue](1,0) to[bend left=50] (4,0);
        \draw[thick] (2,0) to[bend left=25] (3,0);
        \draw[thick] (2,0) to[bend left=40] (4,0);
        \draw[color=black] (0,0) node[align=center, text height=5pt,  below=1.2pt] {$a_2$};
        \draw[color=black] (1,0) node[align=center, text height=5pt,  below=1.2pt] {$a_1$};
        \draw[color=black] (2,0) node[align=center, text height=5pt,  below=1.2pt] {$b_1$};
        \draw[color=black] (3,0) node[align=center, text height=5pt,  below=1.2pt] {$b_2$};
        \draw[color=black] (4,0) node[align=center, text height=5pt,  below=1.2pt] {$c_2$};
        \foreach \x in {0,1,2,3,4}{\draw (\x,0) node[vertex_u]{};}        
        \end{tikzpicture}

        \end{figure}
        
        \vspace{-.8cm}
        \begin{align*}
            &  y_{a_2a_1,a_2b_1} + y_{a_1b_2,a_1c_2} \leq 1 \qquad \forallpwd a_1,b_1,a_2, b_2,c_2 \in \nint
        \end{align*}
        \end{mdframed}

\subsection{Ladder graph $L_3$}

\vspace{1cm}

\begin{mdframed}
    \begin{figure}[H]
    \center
    \hspace{.5cm}
    \raisebox{.8em}{\begin{tikzpicture}[scale=0.7]
    \draw[ultra thick, BrickRed](1,0) to (1,1);
    \draw[ultra thick, BrickRed](1,1) to (0,1.6);
    \draw[ultra thick, BrickRed] (1,1) to (2,1.6);
   \foreach \x/\y in {1/0, 1/1, 0/1.6, 2/1.6}{\draw (\x,\y) node[vertex_u]{};  }                
    \end{tikzpicture}}
    \hspace{2.1cm}
    \raisebox{1em}{\begin{tikzpicture}[scale=0.8]
    \draw[thick] (0,0) to (1,0);
    \draw[ultra thick, BrickRed] (1,0) to (1,1);
    \draw[ultra thick, BrickRed] (1,1) to (0,1);
    \draw[thick] (0,1) to (0,0);
    \draw[thick] (1,0) to (2,0);
    \draw[thick] (2,0) to (2,1);
    \draw[ultra thick, BrickRed] (2,1) to (1,1);

    \draw[color=black] (1,1) node[align=center,   above=1.2pt] {$a$};

   \foreach \x/\y in {0/0, 1/0, 1/1, 0/1, 2/1,2/0}{\draw (\x,\y) node[vertex_u]{};  }                
    \end{tikzpicture}}
    
    \hspace{1.8cm}
    \begin{tikzpicture}[scale=.6]
    \draw[ultra thick, BrickRed](0,0) to[bend left=25] (1,0);
    \draw[ultra thick, BrickRed](0,0) to[bend left=50] (3,0);
    \draw[ultra thick, BrickRed](0,0) to[bend left=70] (5,0);
    \draw[thick] (1,0) to[bend left=25] (2,0);
    \draw[thick] (2,0) to[bend left=50] (5,0);
    \draw[thick] (3,0) to[bend left=25] (4,0);
    \draw[thick] (4,0) to[bend left=25] (5,0);
    \draw[color=black] (0,0) node[align=center, text height=5pt,   below=1.2pt] {$a$};

    \foreach \x in {0,1,2,3,4,5}{\draw (\x,0) node[vertex_u]{};}    
    \end{tikzpicture}
    
    \end{figure}
    
    \vspace{-.6cm}
    \begin{equation*}
        \sum\limits_{c \notin\set{a,b} } y_{ab,ac}\leq 1 \qquad \forall a,b \in \nint
    \end{equation*} 
    \end{mdframed}

\vspace{.5cm}

\begin{mdframed}
    \begin{figure}[H]
    \center
    \hspace{.15cm}
    \raisebox{2.1em}{\begin{tikzpicture}[scale=0.7]
    \draw[ultra thick, BrickRed](0,0) to (0.75,0.75);
    \draw[ultra thick, BrickRed](0.75,0.75) to (1.5,0);
    \draw[ultra thick, NavyBlue](2,0) to (2.75,0.75);
    \draw[ultra thick, NavyBlue](2.75,0.75) to (3.5,0);
   \foreach \x/\y in {0/0,  0.75/0.75, 1.5/0, 2.75/0.75, 3.5/0, 2/0}{\draw (\x,\y) node[vertex_u]{};  }     
    \end{tikzpicture}}
    \hspace{1.5cm}
    \begin{tikzpicture}[scale=0.8]
    \draw[thick] (0,0) to (1,0);
    \draw[thick] (1,0) to (1,1);
    \draw[ultra thick, BrickRed] (1,1) to (0,1);
    \draw[ultra thick, BrickRed] (0,1) to (0,0);
    \draw[ultra thick, NavyBlue] (1,0) to (2,0);
    \draw[ultra thick, NavyBlue] (2,0) to (2,1);
    \draw[thick] (2,1) to (1,1);

    \draw[color=black] (0,1) node[align=center,   above=1.2pt] {$a$};
    \draw[color=black] (0,0) node[align=center,   below=1.2pt] {$c_1$};
    \draw[color=black] (1,1) node[align=center,   above=1.2pt] {$c_2$};
    \draw[color=black] (1,0) node[align=center,   below=1.2pt] {$d_1$};
    \draw[color=black] (2,1) node[align=center,   above=1.2pt] {$d_2$};
    \draw[color=black] (2,0) node[align=center,   below=1.2pt] {$b$};
   \foreach \x/\y in {0/0, 1/0, 1/1, 0/1, 2/1,2/0}{\draw (\x,\y) node[vertex_u]{};  }                
    \end{tikzpicture}
    \hspace{1.5cm}
    \begin{tikzpicture}[scale=.6]
    \draw[ultra thick, BrickRed](0,0) to[bend left=50] (2,0);
    \draw[ultra thick, BrickRed](0,0) to[bend left=70] (4,0);
    \draw[ultra thick, NavyBlue](1,0) to[bend left=50] (3,0);
    \draw[ultra thick, NavyBlue](1,0) to[bend left=70] (5,0);
    \draw[thick] (2,0) to[bend left=45] (3,0);
    \draw[thick] (3,0) to[bend left=45] (4,0);
    \draw[thick] (4,0) to[bend left=25] (5,0);
    \draw[color=black] (0,0) node[align=center, text height=5pt,   below=1.2pt] {$a$};
    \draw[color=black] (1,0) node[align=center, text height=5pt,   below=1.2pt] {$b$};
    \draw[color=black] (2,0) node[align=center, text height=5pt,   below=1.2pt] {$c_1$};
    \draw[color=black] (3,0) node[align=center, text height=5pt,   below=1.2pt] {$d_1$};
    \draw[color=black] (4,0) node[align=center, text height=5pt,   below=1.2pt] {$c_2$};
    \draw[color=black] (5,0) node[align=center, text height=5pt,   below=1.2pt] {$d_2$};
    \foreach \x in {0,1,2,3,4,5}{\draw (\x,0) node[vertex_u]{};}      
    \end{tikzpicture}
    
    \end{figure}
    
    \begin{figure}[H]
    \center
    \hspace{.55cm}
    \raisebox{1.5em}{\begin{tikzpicture}[scale=0.7]
    \draw[ultra thick, BrickRed](0,0) to (1,0.5);
    \draw[ultra thick, BrickRed](0,0) to (1,-0.5);
    \draw[ultra thick, NavyBlue](1,0.5) to (2,1);
    \draw[ultra thick, NavyBlue](1,0.5) to (2,0);
   \foreach \x/\y in {0/0, 1/0.5, 1/-0.5, 2/0, 2/1}{\draw (\x,\y) node[vertex_u]{};  }            
    \end{tikzpicture}}
    \hspace{2.cm}
    \begin{tikzpicture}[scale=0.8]
    \draw[thick] (0,0) to (1,0);
    \draw[ultra thick, NavyBlue](1,0) to (1,1);
    \draw[ultra thick, BrickRed] (1,1) to (0,1);
    \draw[ultra thick, BrickRed] (0,1) to (0,0);
    \draw[thick] (1,0) to (2,0);
    \draw[thick] (2,0) to (2,1);
    \draw[ultra thick, NavyBlue] (2,1) to (1,1);

    \draw[color=black] (0,1) node[align=center,   above=1.2pt] {$a$};
    \draw[color=black] (0,0) node[align=center,   below=1.2pt] {$c_1$};
    \draw[color=black] (1,1) node[align=center,   above=1.2pt] {$b=c_2$};
    \draw[color=black] (1,0) node[align=center,   below=1.2pt] {$d_1$};
    \draw[color=black] (2,1) node[align=center,   above=1.2pt] {$d_2$};
   \foreach \x/\y in {0/0, 1/0, 1/1, 0/1, 2/1,2/0}{\draw (\x,\y) node[vertex_u]{};  }                
    \end{tikzpicture}
    \hspace{1.5cm}
    \begin{tikzpicture}[scale=.6]
    \draw[ultra thick, BrickRed](0,0) to[bend left=25] (1,0);
    \draw[ultra thick, BrickRed](0,0) to[bend left=50] (2,0);
    \draw[ultra thick, NavyBlue](1,0) to[bend left=50] (3,0);
    \draw[ultra thick, NavyBlue](1,0) to[bend left=60] (4,0);
    \draw[thick] (2,0) to[bend left=25] (3,0);
    \draw[thick] (3,0) to[bend left=50] (5,0);
    \draw[thick] (4,0) to[bend left=25] (5,0);
    \draw[color=black] (0,0) node[align=center, text height=5pt,   below=1.2pt] {$a$};
    \draw[color=black] (1,0) node[align=center, text height=5pt,   below=1.2pt] {$b$};
    \draw[color=black] (2,0) node[align=center, text height=5pt,   below=1.2pt] {$c_1$};
    \draw[color=black] (3,0) node[align=center, text height=5pt,   below=1.2pt] {$d_1$};
    
    \draw[color=black] (4,0) node[align=center, text height=5pt,   below=1.2pt] {$d_2$};
    \foreach \x in {0,1,2,3,4,5}{\draw (\x,0) node[vertex_u]{};}     
    \end{tikzpicture}
    
    \end{figure}
    
    \begin{figure}[H]
    \center
    \hspace{.35cm}
    \raisebox{2em}{
    \begin{tikzpicture}[scale=0.7]
    \draw[ultra thick, BrickRed](0,0) to (0.75,0.75);
    \draw[ultra thick, BrickRed](0.75,0.75) to (1.5,0);
    \draw[ultra thick, NavyBlue](1.5,0) to (2.25,0.75);
    \draw[ultra thick, NavyBlue](2.25,0.75) to (3,0);
   \foreach \x/\y in {0/0,  0.75/0.75, 1.5/0, 2.25/0.75, 3/0}{\draw (\x,\y) node[vertex_u]{};  }                
    \end{tikzpicture}}
    \hspace{1.5cm}
    \begin{tikzpicture}[scale=0.8]
    \draw[thick] (0,0) to (1,0);
    \draw[thick](1,0) to (1,1);
    \draw[ultra thick, BrickRed] (1,1) to (0,1);
    \draw[ultra thick, BrickRed] (0,1) to (0,0);
    \draw[thick] (1,0) to (2,0);
    \draw[ultra thick, NavyBlue] (2,0) to (2,1);
    \draw[ultra thick, NavyBlue] (2,1) to (1,1);
    \draw[color=black] (0,1) node[align=center,   above=1.2pt] {$a$};
    \draw[color=black] (0,0) node[align=center,   below=1.2pt] {$c_2$};
    \draw[color=black] (1,1) node[align=center,   above=1.2pt] {$c_1=d_2$};
    \draw[color=black] (2,1) node[align=center,   above=1.2pt] {$b$};
    \draw[color=black] (2,0) node[align=center,   below=1.2pt] {$d_1$};
   \foreach \x/\y in {0/0, 1/0, 1/1, 0/1, 2/1,2/0}{\draw (\x,\y) node[vertex_u]{};  }                
    \end{tikzpicture}
    \hspace{1.5cm}
    \begin{tikzpicture}[scale=.6]
    \draw[ultra thick, BrickRed](0,0) to[bend left=60] (3,0);
    \draw[ultra thick, BrickRed](0,0) to[bend left=50] (2,0);
    \draw[ultra thick, NavyBlue](1,0) to[bend left=25] (2,0);
    \draw[ultra thick, NavyBlue](1,0) to[bend left=60] (4,0);
    \draw[thick] (2,0) to[bend left=60] (5,0);
    \draw[thick] (3,0) to[bend left=50] (5,0);
    \draw[thick] (4,0) to[bend left=25] (5,0);
    \draw[color=black] (0,0) node[align=center, text height=5pt,   below=1.2pt] {$a$};
    \draw[color=black] (1,0) node[align=center, text height=5pt,   below=1.2pt] {$b$};
    \draw[color=black] (2,0) node[align=center, text height=5pt,   below=1.2pt] {$c_1$};
    \draw[color=black] (3,0) node[align=center, text height=5pt,   below=1.2pt] {$c_2$};
    
    \draw[color=black] (4,0) node[align=center, text height=5pt,   below=1.2pt] {$d_1$};
    \foreach \x in {0,1,2,3,4,5}{\draw (\x,0) node[vertex_u]{};}     
    \end{tikzpicture}

    \end{figure}
     
     \vspace{-.6cm}
     \begin{equation*}
         y_{ac_1,ac_2} + y_{bd_1,bd_2} \leq 1 \qquad \forall a,b,c_1,c_2,d_1,d_2 \in \nint \text{ with } \abs{\set{a,b,c_1,c_2, d_1, d_2}} \geq 5 \\
     \end{equation*}
     \end{mdframed}

\newpage
\subsection{Complete bipartite graph minus one edge $\Kbmin{3}{3}$}

\begin{mdframed}
\begin{figure}[H]
\center
\raisebox{1.9em}{
\begin{tikzpicture}[scale=0.7]
\draw[ultra thick, BrickRed](-0.85,0.65) to (0,0);
\draw[ultra thick, BrickRed](-0.85,-0.65) to (0,0);
\draw[ultra thick, BrickRed](0,0) to (1,0);
\draw[ultra thick, NavyBlue](1,0) to (1.85,0.65);
\draw[ultra thick, NavyBlue](1,0) to (1.85,-0.65);
 \foreach \x/\y in {0/0, -0.85/0.65, -0.85/-0.65, 1/0, 1.85/0.65,1.85/-0.65}{\draw (\x,\y) node[vertex_u]{};  }    
\end{tikzpicture}}
\hspace{1.5cm}
\begin{tikzpicture}[scale=0.8]
\draw[ultra thick, BrickRed]  (0,2) to (1,2);
\draw[ultra thick, BrickRed]  (0,2) to (1,1);
\draw[ultra thick, BrickRed]  (0,2) to (1,0);
\draw[ultra thick, NavyBlue] (0,1) to (1,2);
\draw[thick] (0,1) to (1,1);
\draw[thick] (0,1) to (1,0);
\draw[ultra thick, NavyBlue] (0,0) to (1,2);
\draw[thick] (0,0) to (1,1);
\draw[color=black] (0,2) node[align=center,   left=1.2pt] {$a_1$};
\draw[color=black] (0,1) node[align=center,   left=1.2pt] {$b_1$};
\draw[color=black] (0,0) node[align=center,   left=1.2pt] {$c_1$};
\draw[color=black] (1,2) node[align=center,   right=1.2pt] {$a_2$};
\draw[color=black] (1,1) node[align=center,   right=1.2pt] {$b_2$};
\draw[color=black] (1,0) node[align=center,   right=1.2pt] {$c_2$};

 \foreach \x/\y in {0/0, 0/1, 0/2, 1/0, 1/1, 1/2}{\draw (\x,\y) node[vertex_u]{};  }    
\end{tikzpicture}
\hspace{1.5cm}
\begin{tikzpicture}[scale=.6]
\draw[ultra thick, BrickRed](0,0) to[bend left=25] (1,0);
\draw[ultra thick, BrickRed](0,0) to[bend left=50] (3,0);
\draw[ultra thick, BrickRed](0,0) to[bend left=50] (5,0);
\draw[ultra thick, NavyBlue](1,0) to[bend left=25] (2,0);
\draw[ultra thick, NavyBlue](1,0) to[bend left=50] (4,0);
\draw[thick] (2,0) to[bend left=25] (3,0);
\draw[thick] (3,0) to[bend left=25] (4,0);
\draw[thick] (4,0) to[bend left=25] (5,0);
\draw[color=black] (0,0) node[align=center, text height=5pt,   below=1.2pt] {$a_1$};
\draw[color=black] (1,0) node[align=center, text height=5pt,   below=1.2pt] {$a_2$};
\draw[color=black] (2,0) node[align=center, text height=5pt,   below=1.2pt] {$c_1$};
\draw[color=black] (3,0) node[align=center, text height=5pt,   below=1.2pt] {$b_2$};
\draw[color=black] (4,0) node[align=center, text height=5pt,   below=1.2pt] {$b_1$};
\draw[color=black] (5,0) node[align=center, text height=5pt,   below=1.2pt] {$c_2$};
    \foreach \x in {0,1,2,3,4,5}{\draw (\x,0) node[vertex_u]{};}  
\end{tikzpicture}

\end{figure}

\vspace{-.75cm}
\begin{equation*}
    y_{a_1a_2,a_1b_2} + y_{a_1a_2,a_1c_2} + y_{a_1b_2,a_1c_2} +  y_{a_2b_1,a_2c_1} \leq 3 \qquad \forallpwd a_1, b_1, c_1, a_2, b_2, c_2 \in \nint
\end{equation*}
\end{mdframed}

\begin{mdframed}
\begin{figure}[H]
\center
\raisebox{2em}{
\begin{tikzpicture}[scale=0.7]
\draw[ultra thick, BrickRed](-1,0) to (0,0);
\draw[ultra thick, BrickRed](0,0) to (1,0.6);
\draw[ultra thick, BrickRed](0,0) to (1,-0.6);
\draw[ultra thick, NavyBlue](2,0) to (1,0.6);
\draw[ultra thick, NavyBlue](2,0) to (1,-0.6);
 \foreach \x/\y in {0/0, -1/0, 1/0.6, 1/-0.6, 2/0}{\draw (\x,\y) node[vertex_u]{};  } 
\end{tikzpicture}}
\hspace{1.5cm}
\begin{tikzpicture}[scale=0.8]
\draw[ultra thick, BrickRed]  (0,2) to (1,2);
\draw[ultra thick, BrickRed]  (0,2) to (1,1);
\draw[ultra thick, BrickRed]  (0,2) to (1,0);
\draw[thick] (0,1) to (1,2);
\draw[thick] (0,1) to (1,1);
\draw[thick] (0,1) to (1,0);
\draw[ultra thick, NavyBlue] (0,0) to (1,2);
\draw[ultra thick, NavyBlue] (0,0) to (1,1);
\draw[color=black] (0,2) node[align=center,   left=1.2pt] {$a_1$};
\draw[color=black] (0,0) node[align=center,   left=1.2pt] {$c_1$};
\draw[color=black] (0,1) node[align=center,   left=1.2pt] {$b_1$};
\draw[color=black] (1,2) node[align=center,   right=1.2pt] {$a_2$};
\draw[color=black] (1,1) node[align=center,   right=1.2pt] {$b_2$};
\draw[color=black] (1,0) node[align=center,   right=1.2pt] {$c_2$};

 \foreach \x/\y in {0/0, 0/1, 0/2, 1/0, 1/1, 1/2}{\draw (\x,\y) node[vertex_u]{};  }    
\end{tikzpicture}
\hspace{1.5cm}
\begin{tikzpicture}[scale=.6]
\draw[ultra thick, BrickRed](0,0) to[bend left=40] (2,0);
\draw[ultra thick, BrickRed](0,0) to[bend left=50] (3,0);
\draw[ultra thick, BrickRed](0,0) to[bend left=60] (4,0);
\draw[ultra thick, NavyBlue](1,0) to[bend left=25] (2,0);
\draw[ultra thick, NavyBlue](1,0) to[bend left=40] (3,0);
\draw[thick] (2,0) to[bend left=50] (5,0);
\draw[thick] (3,0) to[bend left=40] (5,0);
\draw[thick] (4,0) to[bend left=25] (5,0);
\draw[color=black] (0,0) node[align=center, text height=5pt,   below=1.2pt] {$a_1$};
\draw[color=black] (1,0) node[align=center, text height=5pt,   below=1.2pt] {$c_1$};
\draw[color=black] (2,0) node[align=center, text height=5pt,   below=1.2pt] {$a_2$};
\draw[color=black] (3,0) node[align=center, text height=5pt,   below=1.2pt] {$b_2$};
\draw[color=black] (4,0) node[align=center, text height=5pt,   below=1.2pt] {$c_2$};
\draw[color=black] (5,0) node[align=center, text height=5pt,   below=1.2pt] {$b_1$};
    \foreach \x in {0,1,2,3,4,5}{\draw (\x,0) node[vertex_u]{};}  
\end{tikzpicture}

\end{figure}

\vspace{-.75cm}
\begin{equation*}
y_{a_1a_2,a_1b_2} + y_{a_1a_2,a_1c_2} + y_{a_1b_2,a_1c_2} + y_{c_1a_2, c_1b_2} \leq 3 \qquad \forallpwd a_1, b_1, c_1, a_2, b_2, c_2 \in \nint
\end{equation*}
\end{mdframed}

\begin{mdframed}
\begin{figure}[H]
\center
\raisebox{2em}{
\begin{tikzpicture}[scale=0.7]        
\draw[ultra thick, BrickRed](0,0) to (1,0.5);
\draw[ultra thick, BrickRed](0,0) to (1,-0.5);
\draw[ultra thick, NavyBlue](1,0.5) to (2,1);
\draw[ultra thick, NavyBlue](1,0.5) to (2,0);
\draw[ultra thick, Green](1,-0.5) to (2,0);
\draw[ultra thick, Green](2,0) to (3,-0.5);
   \foreach \x/\y in {0/0, 1/0.5, 1/-0.5, 2/0, 2/1,3/-0.5}{\draw (\x,\y) node[vertex_u]{};  }    
\end{tikzpicture}}
\hspace{1.5cm}
\begin{tikzpicture}[scale=0.8]
\draw[ultra thick, NavyBlue]  (0,2) to (1,2);
\draw[ultra thick, Green]  (0,2) to (1,1);
\draw[ultra thick, Green]  (0,2) to (1,0);
\draw[ultra thick, NavyBlue] (0,1) to (1,2);
\draw[thick] (0,1) to (1,1);
\draw[thick] (0,1) to (1,0);
\draw[ultra thick, BrickRed] (0,0) to (1,2);
\draw[ultra thick, BrickRed] (0,0) to (1,1);
\draw[color=black] (0,2) node[align=center,   left=1.2pt] {$a_1$};
\draw[color=black] (0,1) node[align=center,   left=1.2pt] {$b_1$};
\draw[color=black] (0,0) node[align=center,   left=1.2pt] {$c_1$};
\draw[color=black] (1,2) node[align=center,   right=1.2pt] {$a_2$};
\draw[color=black] (1,1) node[align=center,   right=1.2pt] {$b_2$};
\draw[color=black] (1,0) node[align=center,   right=1.2pt] {$c_2$};

 \foreach \x/\y in {0/0, 0/1, 0/2, 1/0, 1/1, 1/2}{\draw (\x,\y) node[vertex_u]{};  }    
\end{tikzpicture}
\hspace{1.5cm}
\begin{tikzpicture}[scale=.6]
\draw[ultra thick, BrickRed](0,0) to[bend left=25] (1,0);
\draw[ultra thick, BrickRed](0,0) to[bend left=60] (3,0);
\draw[ultra thick, NavyBlue](1,0) to[bend left=25] (2,0);
\draw[ultra thick, NavyBlue](1,0) to[bend left=60] (4,0);
\draw[ultra thick, Green] (2,0) to[bend left=25] (3,0);
\draw[ultra thick, Green] (2,0) to[bend left=60] (5,0);
\draw[thick] (3,0) to[bend left=25] (4,0);
\draw[thick] (4,0) to[bend left=25] (5,0);
\draw[color=black] (0,0) node[align=center,  text height=5pt,   below=1.2pt] {$c_1$};
\draw[color=black] (1,0) node[align=center,  text height=5pt,   below=1.2pt] {$a_2$};
\draw[color=black] (2,0) node[align=center,  text height=5pt,   below=1.2pt] {$a_1$};
\draw[color=black] (3,0) node[align=center,  text height=5pt,   below=1.2pt] {$b_2$};
\draw[color=black] (4,0) node[align=center,  text height=5pt,   below=1.2pt] {$b_1$};
\draw[color=black] (5,0) node[align=center,  text height=5pt,   below=1.2pt] {$c_2$};
    \foreach \x in {0,1,2,3,4,5}{\draw (\x,0) node[vertex_u]{};}  
\end{tikzpicture}

\end{figure}

\vspace{-.75cm}
  \begin{align*}
     y_{a_1b_2,a_1c_2}  + y_{a_2a_1,a_2b_1} + y_{c_1a_2, c_1b_2}  \leq 2 \qquad \forallpwd a_1, b_1, c_1, a_2, b_2, c_2 \in \nint
 \end{align*}
\end{mdframed}
\begin{mdframed}
\begin{figure}[H]
\center
\raisebox{2em}{
\begin{tikzpicture}[scale=0.7]
\draw[ultra thick, BrickRed](0,0) to (1,0.5);
\draw[ultra thick, BrickRed](0,0) to (1,-0.5);
\draw[ultra thick, NavyBlue](1,0.5) to (2,1);
\draw[ultra thick, NavyBlue](1,0.5) to (2,0);
\draw[ultra thick, Green](2,1) to (3,0.5);
\draw[ultra thick, Green](2,0) to (3,0.5);
   \foreach \x/\y in {0/0, 1/0.5, 1/-0.5, 2/0, 2/1,3/0.5}{\draw (\x,\y) node[vertex_u]{};  }   
\end{tikzpicture}}
\hspace{1.5cm}
\begin{tikzpicture}[scale=0.8]
\draw[ultra thick, NavyBlue]  (0,2) to (1,2);
\draw[thick]  (0,2) to (1,1);
\draw[ultra thick, Green]  (0,2) to (1,0);
\draw[ultra thick, NavyBlue] (0,1) to (1,2);
\draw[thick] (0,1) to (1,1);
\draw[ultra thick, Green] (0,1) to (1,0);
\draw[ultra thick, BrickRed] (0,0) to (1,2);
\draw[ultra thick, BrickRed] (0,0) to (1,1);
\draw[color=black] (0,2) node[align=center,   left=1.2pt] {$a_1$};
\draw[color=black] (0,1) node[align=center,   left=1.2pt] {$b_1$};
\draw[color=black] (0,0) node[align=center,   left=1.2pt] {$c_1$};
\draw[color=black] (1,2) node[align=center,   right=1.2pt] {$a_2$};
\draw[color=black] (1,1) node[align=center,   right=1.2pt] {$b_2$};
\draw[color=black] (1,0) node[align=center,   right=1.2pt] {$c_2$};

 \foreach \x/\y in {0/0, 0/1, 0/2, 1/0, 1/1, 1/2}{\draw (\x,\y) node[vertex_u]{};  }    
\end{tikzpicture}
\hspace{1.5cm}

\begin{tikzpicture}[scale=.6]
\draw[ultra thick, BrickRed](0,0) to[bend left=25] (1,0);
\draw[ultra thick, BrickRed](0,0) to[bend left=60] (5,0);
\draw[ultra thick, NavyBlue](1,0) to[bend left=50] (3,0);
\draw[ultra thick, NavyBlue](1,0) to[bend left=60] (4,0);
\draw[ultra thick, Green] (2,0) to[bend left=25] (3,0);
\draw[ultra thick, Green] (2,0) to[bend left=50] (4,0);
\draw[thick] (3,0) to[bend left=50] (5,0);
\draw[thick] (4,0) to[bend left=25] (5,0);
\draw[color=black] (0,0) node[align=center, text height=5pt,   below=1.2pt] {$c_1$};
\draw[color=black] (1,0) node[align=center, text height=5pt,   below=1.2pt] {$a_2$};
\draw[color=black] (2,0) node[align=center, text height=5pt,   below=1.2pt] {$c_2$};
\draw[color=black] (3,0) node[align=center, text height=5pt,   below=1.2pt] {$a_1$};
\draw[color=black] (4,0) node[align=center, text height=5pt,   below=1.2pt] {$b_1$};
\draw[color=black] (5,0) node[align=center, text height=5pt,   below=1.2pt] {$b_2$};
    \foreach \x in {0,1,2,3,4,5}{\draw (\x,0) node[vertex_u]{};}  
\end{tikzpicture}

\end{figure}

\vspace{-.75cm}
  \begin{align*}
      y_{a_2a_1,a_2b_1} + y_{c_1a_2, c_1b_2} +  y_{c_2a_1,c_2b_1} \leq 2 \qquad \forallpwd a_1, b_1, c_1, a_2, b_2, c_2 \in \nint
 \end{align*}
 \end{mdframed}

\newpage
\subsection{Complete bipartite graph $\Kb{2}{4}$}

\begin{mdframed}
\begin{figure}[H]
\center
\raisebox{.75em}{
\hspace{.25cm}
\begin{tikzpicture}[scale=0.7]
\draw[ultra thick, BrickRed](0,0) to (1,0.25);
\draw[ultra thick, BrickRed](0,0) to (1,0.8);
\draw[ultra thick, BrickRed](0,0) to (1,-0.25);
\draw[ultra thick, BrickRed](0,0) to (1,-0.8);
 \foreach \x/\y in {0/0, 1/0.25, 1/0.8, 1/-0.8, 1/-0.25}{\draw (\x,\y) node[vertex_u]{};  }     

\end{tikzpicture}}
\hspace{2.1cm}
\begin{tikzpicture}[scale=0.8]
\draw[ultra thick, BrickRed] (0,2) to (1,2.5);
\draw[ultra thick, BrickRed] (0,2) to (1,1.85);
\draw[ultra thick, BrickRed] (0,2) to (1,1.15);
\draw[ultra thick, BrickRed] (0,2) to (1,0.5);
\draw[thick] (0,1) to (1,2.5);
\draw[thick] (0,1) to (1,1.85);
\draw[thick] (0,1) to (1,1.15);
\draw[thick] (0,1) to (1,0.5);
\draw[color=black] (0,2) node[align=center,   left=1.2pt] {$a$};
\draw[color=black] (1,2.5) node[align=center,   left=1.2pt] {};
 \foreach \x/\y in {0/1, 0/2, 1/0.5, 1/1.85, 1/1.15, 1/2.5}{\draw (\x,\y) node[vertex_u]{};  }     
\end{tikzpicture}
\hspace{2.25cm}
\begin{tikzpicture}[scale=.6]
    \foreach \x in {0,1,2,3,4,5}{\draw (\x,0) node[vertex_u]{};}  
\draw[ultra thick, BrickRed](0,0) to[bend left=25] (1,0);
\draw[ultra thick, BrickRed](0,0) to[bend left=40] (2,0);
\draw[ultra thick, BrickRed](0,0) to[bend left=50] (3,0);
\draw[ultra thick, BrickRed](0,0) to[bend left=60] (4,0);
\draw[thick] (1,0) to[bend left=60] (5,0);
\draw[thick] (2,0) to[bend left=50] (5,0);
\draw[thick] (3,0) to[bend left=40] (5,0);
\draw[thick] (4,0) to[bend left=25] (5,0);
\draw[color=black] (0,0) node[align=center, text height=5pt, text height=5pt, text height=5pt,   below=1.2pt] {$a$};

\end{tikzpicture}

\end{figure}

\vspace{-.4cm}
\begin{equation}
\label{eq:K23_no4edges}
    \sum\limits_{c \notin\set{a,b}}y_{ab,ac} \leq 2 \qquad\foralld a,b \in \nint
\end{equation}
\end{mdframed}

\begin{mdframed}
    \begin{figure}[H]
    \center
    \raisebox{1em}{
    \begin{tikzpicture}[scale=0.7]
    \draw[ultra thick, BrickRed](0,0) to (1,0.5);
    \draw[ultra thick, BrickRed](0,0) to (1,-0.5);
    \draw[ultra thick, NavyBlue](1,0.5) to (2,1.25);
    \draw[ultra thick, NavyBlue](1,0.5) to (2.1,0.5);
    \draw[ultra thick, NavyBlue](1,0.5) to (2,-0.25);
 \foreach \x/\y in {0/0, 1/0.5, 1/-0.5, 2.1/0.5, 2/1.25, 2/-0.25}{\draw (\x,\y) node[vertex_u]{};  }     
    
    \end{tikzpicture}}
    \hspace{1.5cm}
    \begin{tikzpicture}[scale=0.8]
    \draw[ultra thick, BrickRed] (0,2) to (1,2.5);
    \draw[ultra thick, NavyBlue] (0,2) to (1,1.85);
    \draw[ultra thick, NavyBlue] (0,2) to (1,1.15);
    \draw[ultra thick, NavyBlue] (0,2) to (1,0.5);
    \draw[ultra thick, BrickRed] (0,1) to (1,2.5);
    \draw[thick] (0,1) to (1,1.85);
    \draw[thick] (0,1) to (1,1.15);
    \draw[thick] (0,1) to (1,0.5);
    \draw[color=black] (0,2) node[align=center,   left=1.2pt] {$a_1$};
    \draw[color=black] (1,2.5) node[align=center,   right=1.2pt] {$a_2$};
    \draw[color=black] (0,1) node[align=center,   left=1.2pt] {$b_1$};
    \draw[color=black] (1,1.85) node[align=center,   right=1.2pt] {$b_2$};
    \draw[color=black] (1,1.15) node[align=center,   right=1.2pt] {$c_2$};
    \draw[color=black] (1,0.5) node[align=center,   right=1.2pt] {$d_2$};
 \foreach \x/\y in {0/1, 0/2, 1/0.5, 1/1.85, 1/1.15, 1/2.5}{\draw (\x,\y) node[vertex_u]{};  }         
    \end{tikzpicture}
    \hspace{1.5cm}
    \begin{tikzpicture}[scale=.6]
    \foreach \x in {0,1,2,3,4,5}{\draw (\x,0) node[vertex_u]{};}      
    \draw[ultra thick, BrickRed](0,0) to[bend left=25] (1,0);
    \draw[ultra thick, BrickRed](0,0) to[bend left=60] (5,0);
    \draw[ultra thick, NavyBlue](1,0) to[bend left=40] (2,0);
    \draw[ultra thick, NavyBlue](1,0) to[bend left=50] (3,0);
    \draw[ultra thick, NavyBlue](1,0) to[bend left=60] (4,0);
    \draw[thick] (2,0) to[bend left=60] (5,0);
    \draw[thick] (3,0) to[bend left=50] (5,0);
    \draw[thick] (4,0) to[bend left=40] (5,0);
    \draw[color=black] (0,0) node[align=center, text height=5pt, text height=5pt,   below=1.2pt] {$a_2$};
    \draw[color=black] (1,0) node[align=center, text height=5pt, text height=5pt,   below=1.2pt] {$a_1$};
    \draw[color=black] (5,0) node[align=center, text height=5pt, text height=5pt,   below=1.2pt] {$b_1$};
    \draw[color=black] (2,0) node[align=center, text height=5pt, text height=5pt,   below=1.2pt] {$b_2$};
    \draw[color=black] (3,0) node[align=center, text height=5pt, text height=5pt,   below=1.2pt] {$c_2$};
    \draw[color=black] (4,0) node[align=center, text height=5pt, text height=5pt,   below=1.2pt] {$d_2$};
    \end{tikzpicture}
    
    \end{figure}
    
    \vspace{-.75cm}
    \begin{equation}
    \label{eq:K23_no3+2edges}
        y_{a_2a_1,a_2b_1} + \sum\limits_{c \notin\set{a_1,a_2,b_1}}y_{a_1b_2,a_1c} \leq 2 \qquad \forallpwd a_1,b_1,a_2,b_2 \in \nint
    \end{equation}
    \end{mdframed}

 Note that constraint~\eqref{eq:K23_no3+2edges} implies the previous constraint~\eqref{eq:K23_no4edges}.

\begin{mdframed}
    \begin{figure}[H]
    \center
    \raisebox{2em}{
    \begin{tikzpicture}[scale=0.7]
    \draw[ultra thick, BrickRed](0,0) to (1,0.6);
    \draw[ultra thick, BrickRed](0,0) to (1,-0.6);
    \draw[ultra thick, NavyBlue](1.4,0) to (1,0.6);
    \draw[ultra thick, NavyBlue](1.4,0) to (1,-0.6);
    \draw[ultra thick, Green](2,0) to (1,0.6);
    \draw[ultra thick, Green](2,0) to (1,-0.6);
 \foreach \x/\y in {0/0, 1/0.6, 1/-0.6, 1.4/0, 2/0}{\draw (\x,\y) node[vertex_u]{};  }         
    \end{tikzpicture}}
    \hspace{1.5cm}
    \begin{tikzpicture}[scale=0.8]
    \draw[ultra thick, BrickRed] (0,2) to (1,2.5);
    \draw[ultra thick, BrickRed] (0,1) to (1,2.5);
    \draw[ultra thick, NavyBlue] (0,2) to (1,1.85);
    \draw[ultra thick, NavyBlue] (0,1) to (1,1.85);
    \draw[ultra thick, Green] (0,2) to (1,1.15);
    \draw[ultra thick, Green] (0,1) to (1,1.15);
    \draw[thick] (0,2) to (1,0.5);
    \draw[thick] (0,1) to (1,0.5);
    \draw[color=black] (0,2) node[align=center,   left=1.2pt] {$a_1$};
    \draw[color=black] (0,1) node[align=center,   left=1.2pt] {$b_1$};
    \draw[color=black] (1,2.5) node[align=center,   right=1.2pt] {$a_2$};
    \draw[color=black] (1,1.85) node[align=center,   right=1.2pt] {$b_2$};
    \draw[color=black] (1,1.15) node[align=center,   right=1.2pt] {$c_2$};
    \draw[color=black] (1,0.5) node[align=center,   right=1.2pt] {$d_2$};
 \foreach \x/\y in {0/1, 0/2, 1/0.5, 1/1.85, 1/1.15, 1/2.5}{\draw (\x,\y) node[vertex_u]{};  }         
    \end{tikzpicture}
    \hspace{1.5cm}
    \begin{tikzpicture}[scale=.6]
    \foreach \x in {0,1,2,3,4,5}{\draw (\x,0) node[vertex_u]{};}      
    \draw[ultra thick, BrickRed](0,0) to[bend left=50] (3,0);
    \draw[ultra thick, BrickRed](0,0) to[bend left=60] (4,0);
    \draw[ultra thick, NavyBlue](1,0) to[bend left=40] (3,0);
    \draw[ultra thick, NavyBlue](1,0) to[bend left=50] (4,0);
    \draw[ultra thick, Green](2,0) to[bend left=25] (3,0);
    \draw[ultra thick, Green](2,0) to[bend left=40] (4,0);
    \draw[thick] (3,0) to[bend left=50] (5,0);
    \draw[thick] (4,0) to[bend left=40] (5,0);
    \draw[color=black] (0,0) node[align=center, text height=5pt,   below=1.2pt] {$a_2$};
    \draw[color=black] (1,0) node[align=center, text height=5pt,   below=1.2pt] {$b_2$};
    \draw[color=black] (2,0) node[align=center, text height=5pt,   below=1.2pt] {$c_2$};
    \draw[color=black] (3,0) node[align=center, text height=5pt,   below=1.2pt] {$a_1$};
    \draw[color=black] (4,0) node[align=center, text height=5pt,   below=1.2pt] {$b_1$};
    \draw[color=black] (5,0) node[align=center, text height=5pt,   below=1.2pt] {$d_2$};
    \end{tikzpicture}
    
    \end{figure}
    
    \vspace{-.5cm}
    \begin{equation*} \label{eq:K23_no3edgeswith2a}
        \sum\limits_{e \notin\set{a_1,b_1}}y_{ea_1,eb_1} \leq 2 \qquad \foralld a_1, b_1 \in \nint
    \end{equation*}
    
    \end{mdframed}

\begin{mdframed}
    \begin{figure}[H]
    \center
    \raisebox{2em}{
    \begin{tikzpicture}[scale=0.7]
    \draw[ultra thick, BrickRed](0,0) to (1,0.5);
    \draw[ultra thick, BrickRed](0,0) to (1,-0.5);
    \draw[ultra thick, NavyBlue](2,0) to (1,0.5);
    \draw[ultra thick, NavyBlue](2,0) to (1,-0.5);
    \draw[ultra thick, Green](1,0.5) to (1.75,0.9);
    \draw[ultra thick, Green](1,0.5) to (0.25,0.9);
 \foreach \x/\y in {0/0, 1/0.5, 1/-0.5, 1.75/0.9, 0.25/0.9, 2/0}{\draw (\x,\y) node[vertex_u]{};  }         
    \end{tikzpicture}}
    \hspace{1.5cm}
    \begin{tikzpicture}[scale=0.8]
    \draw[ultra thick, BrickRed] (0,2) to (1,2.5);
    \draw[ultra thick, BrickRed] (0,1) to (1,2.5);
    \draw[ultra thick, NavyBlue] (0,2) to (1,1.85);
    \draw[ultra thick, NavyBlue] (0,1) to (1,1.85);
    \draw[ultra thick, Green] (0,2) to (1,1.15);
    \draw[thick] (0,1) to (1,1.15);
    \draw[ultra thick, Green] (0,2) to (1,0.5);
    \draw[thick] (0,1) to (1,0.5);
    \draw[color=black] (0,2) node[align=center,   left=1.2pt] {$a_1$};
    \draw[color=black] (0,1) node[align=center,   left=1.2pt] {$b_1$};
    \draw[color=black] (1,2.5) node[align=center,   right=1.2pt] {$a_2$};
    \draw[color=black] (1,1.85) node[align=center,   right=1.2pt] {$b_2$};
    \draw[color=black] (1,1.15) node[align=center,   right=1.2pt] {$c_2$};
    \draw[color=black] (1,0.5) node[align=center,   right=1.2pt] {$d_2$};
 \foreach \x/\y in {0/1, 0/2, 1/0.5, 1/1.85, 1/1.15, 1/2.5}{\draw (\x,\y) node[vertex_u]{};  }         
    \end{tikzpicture}
    \hspace{1.5cm}
    \begin{tikzpicture}[scale=.6]
    \draw[ultra thick, BrickRed](0,0) to[bend left=50] (2,0);
    \draw[ultra thick, BrickRed](0,0) to[bend left=60] (5,0);
    \draw[ultra thick, NavyBlue](1,0) to[bend left=25] (2,0);
    \draw[ultra thick, NavyBlue](1,0) to[bend left=50] (5,0);
    \draw[ultra thick, Green](2,0) to[bend left=25] (3,0);
    \draw[ultra thick, Green](2,0) to[bend left=50] (4,0);
    \draw[thick] (3,0) to[bend left=50] (5,0);
    \draw[thick] (4,0) to[bend left=40] (5,0);
    \draw[color=black] (0,0) node[align=center, text height=5pt,   below=1.2pt] {$a_2$};
    \draw[color=black] (1,0) node[align=center, text height=5pt,   below=1.2pt] {$b_2$};
    \draw[color=black] (2,0) node[align=center, text height=5pt,   below=1.2pt] {$a_1$};
    \draw[color=black] (3,0) node[align=center, text height=5pt,   below=1.2pt] {$c_2$};
    \draw[color=black] (4,0) node[align=center, text height=5pt,   below=1.2pt] {$d_2$};
    \draw[color=black] (5,0) node[align=center, text height=5pt,   below=1.2pt] {$b_1$};
    \foreach \x in {0,1,2,3,4,5}{\draw (\x,0) node[vertex_u]{};}        
    \end{tikzpicture}

    \end{figure}

    \vspace{-.8cm}
    \begin{equation*}
        y_{a_2a_1, a_2b_1} + y_{b_2a_1, b_2b_1} + y_{a_1c_2,a_1d_2} \leq 2 \qquad \forallpwd a_2,b_2, a_1,b_1,c_2,d_2 \in \nint
    \end{equation*}
    \end{mdframed}

\newpage
\subsection{Complete bipartite graph $\Kb{3}{3}$}

\begin{mdframed} 
    \begin{figure}[H]
    \center
    \raisebox{2em}{
    \begin{tikzpicture}[scale=0.7]
    \draw[ultra thick, BrickRed](0,0) to (1,0.75);
    \draw[ultra thick, BrickRed](0,0) to (1,0);
    \draw[ultra thick, BrickRed](0,0) to (1,-0.75);
    \draw[ultra thick, NavyBlue](2,0) to (1,0.75);
    \draw[ultra thick, NavyBlue](2,0) to (1,0);
    \draw[ultra thick, NavyBlue](2,0) to (1,-0.75);
 \foreach \x/\y in {0/0, 1/0, 1/0.75, 1/-0.75, 2/0}{\draw (\x,\y) node[vertex_u]{};  }         
    \end{tikzpicture}}
    \hspace{1.5cm}
    \begin{tikzpicture}[scale=1]
    \draw[ultra thick, BrickRed] (0,2) to (1,2);
    \draw[ultra thick, BrickRed] (0,2) to (1,1);
    \draw[ultra thick, BrickRed] (0,2) to (1,0);
    \draw[ultra thick, NavyBlue] (0,1) to (1,2);
    \draw[ultra thick, NavyBlue] (0,1) to (1,1);
    \draw[ultra thick, NavyBlue] (0,1) to (1,0);
    \draw[thick] (0,0) to (1,2);
    \draw[thick] (0,0) to (1,1);
    \draw[thick] (0,0) to (1,0);
    \draw[color=black] (0,2) node[align=center,   left=1.2pt] {$a_1$};
    \draw[color=black] (0,1) node[align=center,   left=1.2pt] {$b_1$};
    \draw[color=black] (0,0) node[align=center,   left=1.2pt] {$c_1$};
    \draw[color=black] (1,2) node[align=center,   right=1.2pt] {$a_2$};
    \draw[color=black] (1,1) node[align=center,   right=1.2pt] {$b_2$};
    \draw[color=black] (1,0) node[align=center,   right=1.2pt] {$c_2$};
 \foreach \x/\y in {0/0, 0/1, 0/2, 1/0, 1/1, 1/2}{\draw (\x,\y) node[vertex_u]{};  }     
    \end{tikzpicture}
    \hspace{1.5cm}
    \begin{tikzpicture}[scale=1]
    \foreach \x in {0,1,2,3,4,5}{\draw (\x,0) node[vertex_u]{};}      
    \draw[ultra thick, BrickRed](0,0) to[bend left=40] (2,0);
    \draw[ultra thick, BrickRed](0,0) to[bend left=50] (3,0);
    \draw[ultra thick, BrickRed](0,0) to[bend left=60] (4,0);
    \draw[ultra thick, NavyBlue](1,0) to[bend left=25] (2,0);
    \draw[ultra thick, NavyBlue](1,0) to[bend left=40] (3,0);
    \draw[ultra thick, NavyBlue](1,0) to[bend left=50] (4,0);
    \draw[thick] (2,0) to[bend left=50] (5,0);
    \draw[thick] (3,0) to[bend left=40] (5,0);
    \draw[thick] (4,0) to[bend left=25] (5,0);
    \draw[color=black] (0,0) node[align=center, text height=5pt,   below=1.2pt] {$a_1$};
    \draw[color=black] (1,0) node[align=center, text height=5pt,   below=1.2pt] {$b_1$};
    \draw[color=black] (2,0) node[align=center, text height=5pt,   below=1.2pt] {$a_2$};
    \draw[color=black] (3,0) node[align=center, text height=5pt,   below=1.2pt] {$b_2$};
    \draw[color=black] (4,0) node[align=center, text height=5pt,   below=1.2pt] {$c_2$};
    \draw[color=black] (5,0) node[align=center, text height=5pt,   below=1.2pt] {$c_1$};
    \end{tikzpicture}
    
    \end{figure}
    
    \vspace{-.75cm}
    \begin{align*}
    y_{a_1a_2,a_1b_2} + y_{a_1a_2,a_1c_2} + y_{a_1b_2,a_1c_2} + y_{b_1a_2,b_1b_2} + y_{b_1a_2,b_1c_2} + y_{b_1b_2,b_1c_2} \leq 4& \\  \forallpwd &a_1,b_1,a_2,b_2,c_2  \in \nint
    \end{align*}
    \end{mdframed} 
    
\begin{mdframed}
    \begin{figure}[H]
    \center
    \raisebox{2em}{
    \begin{tikzpicture}[scale=0.7]
    \draw[ultra thick, BrickRed](0,0) to (1,0.75);
    \draw[ultra thick, BrickRed](0,0) to (1,0);
    \draw[ultra thick, BrickRed](0,0) to (1,-0.75);
    \draw[ultra thick, NavyBlue](1,0.75) to (2,1.25);
    \draw[ultra thick, NavyBlue](1,0.75) to (2,0);
    \draw[ultra thick, Green](2,0) to (1,0);
    \draw[ultra thick, Green](2,0) to (1,-.75);
 \foreach \x/\y in {0/0, 1/0, 1/0.75, 1/-0.75, 2/0, 2/1.25}{\draw (\x,\y) node[vertex_u]{};  }         
    \end{tikzpicture}}
    \hspace{1.5cm}
    \begin{tikzpicture}[scale=1]
    \draw[ultra thick, BrickRed] (0,2) to (1,2);
    \draw[ultra thick, BrickRed] (0,2) to (1,1);
    \draw[ultra thick, BrickRed] (0,2) to (1,0);
    \draw[ultra thick, NavyBlue] (0,1) to (1,2);
    \draw[ultra thick, Green] (0,1) to (1,1);
    \draw[ultra thick, Green] (0,1) to (1,0);
    \draw[ultra thick, NavyBlue] (0,0) to (1,2);
    \draw[thick] (0,0) to (1,1);
    \draw[thick] (0,0) to (1,0);
    \draw[color=black] (0,2) node[align=center,   left=1.2pt] {$a_1$};
    \draw[color=black] (0,1) node[align=center,   left=1.2pt] {$b_1$};
    \draw[color=black] (0,0) node[align=center,   left=1.2pt] {$c_1$};
    \draw[color=black] (1,2) node[align=center,   right=1.2pt] {$a_2$};
    \draw[color=black] (1,1) node[align=center,   right=1.2pt] {$b_2$};
    \draw[color=black] (1,0) node[align=center,   right=1.2pt] {$c_2$};
 \foreach \x/\y in {0/0, 0/1, 0/2, 1/0, 1/1, 1/2}{\draw (\x,\y) node[vertex_u]{};  }   
 \end{tikzpicture}
    \hspace{1.5cm}
    \begin{tikzpicture}[scale=1]
    \draw[ultra thick, BrickRed](0,0) to[bend left=25] (1,0);
    \draw[ultra thick, BrickRed](0,0) to[bend left=40] (3,0);
    \draw[ultra thick, BrickRed](0,0) to[bend left=50] (4,0);
    \draw[ultra thick, NavyBlue](1,0) to[bend left=25] (2,0);
    \draw[ultra thick, NavyBlue](1,0) to[bend left=50] (5,0);
    \draw[ultra thick, Green] (2,0) to[bend left=25] (3,0);
    \draw[ultra thick, Green] (2,0) to[bend left=40] (4,0);
    \draw[thick] (3,0) to[bend left=40] (5,0);
    \draw[thick] (4,0) to[bend left=25] (5,0);
    \draw[color=black] (0,0) node[align=center, text height=5pt,   below=1.2pt] {$a_1$};
    \draw[color=black] (1,0) node[align=center, text height=5pt,   below=1.2pt] {$a_2$};
    \draw[color=black] (2,0) node[align=center, text height=5pt,   below=1.2pt] {$b_1$};
    \draw[color=black] (3,0) node[align=center, text height=5pt,   below=1.2pt] {$b_2$};
    \draw[color=black] (4,0) node[align=center, text height=5pt,   below=1.2pt] {$c_2$};
    \draw[color=black] (5,0) node[align=center, text height=5pt,   below=1.2pt] {$c_1$};
    \foreach \x in {0,1,2,3,4,5}{\draw (\x,0) node[vertex_u]{};}     
    \end{tikzpicture}
    
    \end{figure}
     \vspace{-.8cm}
    \begin{align*}
    y_{a_1a_2,a_1b_2} + y_{a_1a_2,a_1c_2} + y_{a_1b_2,a_1c_2} + y_{a_2b_1,a_2c_1} &+  y_{b_1b_2,b_1c_2} \leq 4 \\
      &\forallpwd a_1,b_1,c_1,a_2,b_2,c_2  \in \nint
    \end{align*}
\end{mdframed}
\begin{mdframed}
    \begin{figure}[H]
    \center
    \raisebox{2em}{
    \begin{tikzpicture}[scale=0.7]
    \draw[ultra thick, BrickRed](0,0) to (1,0.75);
    \draw[ultra thick, BrickRed](0,0) to (1,0);
    \draw[ultra thick, BrickRed](0,0) to (1,-0.75);
    \draw[ultra thick, NavyBlue](1,0.75) to (2,1.25);
    \draw[ultra thick, NavyBlue](1,0.75) to (2,0);
    \draw[ultra thick, Green](2,0) to (1,0);
    \draw[ultra thick, Green](2,1.25) to (1,0);
 \foreach \x/\y in {0/0, 1/0, 1/0.75, 1/-0.75, 2/0, 2/1.25}{\draw (\x,\y) node[vertex_u]{};  }     
    \end{tikzpicture}}
    \hspace{1.5cm}
    \begin{tikzpicture}[scale=1]
    \draw[ultra thick, BrickRed] (0,2) to (1,2);
    \draw[ultra thick, BrickRed] (0,2) to (1,1);
    \draw[ultra thick, BrickRed] (0,2) to (1,0);
    \draw[ultra thick, NavyBlue] (0,1) to (1,2);
    \draw[ultra thick, Green] (0,1) to (1,1);
    \draw[thick] (0,1) to (1,0);
    \draw[ultra thick, NavyBlue] (0,0) to (1,2);
    \draw[ultra thick, Green] (0,0) to (1,1);
    \draw[thick] (0,0) to (1,0);
    \draw[color=black] (0,2) node[align=center,   left=1.2pt] {$a_1$};
    \draw[color=black] (0,1) node[align=center,   left=1.2pt] {$b_1$};
    \draw[color=black] (0,0) node[align=center,   left=1.2pt] {$c_1$};
    \draw[color=black] (1,2) node[align=center,   right=1.2pt] {$a_2$};
    \draw[color=black] (1,1) node[align=center,   right=1.2pt] {$b_2$};
    \draw[color=black] (1,0) node[align=center,   right=1.2pt] {$c_2$};
 \foreach \x/\y in {0/0, 0/1, 0/2, 1/0, 1/1, 1/2}{\draw (\x,\y) node[vertex_u]{};  }        
    \end{tikzpicture}
    \hspace{1.5cm}
    \begin{tikzpicture}[scale=1]
    \draw[ultra thick, BrickRed](0,0) to[bend left=25] (1,0);
    \draw[ultra thick, BrickRed](0,0) to[bend left=45] (5,0);
    \draw[ultra thick, BrickRed](0,0) to[bend left=40] (2,0);
    \draw[ultra thick, NavyBlue](1,0) to[bend left=40] (3,0);
    \draw[ultra thick, NavyBlue](1,0) to[bend left=50] (4,0);
    \draw[ultra thick, Green] (2,0) to[bend left=25] (3,0);
    \draw[ultra thick, Green] (2,0) to[bend left=40] (4,0);
    \draw[thick] (3,0) to[bend left=40] (5,0);
    \draw[thick] (4,0) to[bend left=25] (5,0);
    \draw[color=black] (0,0) node[align=center, text height=5pt,   below=1.2pt] {$a_1$};
    \draw[color=black] (1,0) node[align=center, text height=5pt,   below=1.2pt] {$a_2$};
    \draw[color=black] (2,0) node[align=center, text height=5pt,   below=1.2pt] {$b_2$};
    \draw[color=black] (3,0) node[align=center, text height=5pt,   below=1.2pt] {$b_1$};
    \draw[color=black] (4,0) node[align=center, text height=5pt,   below=1.2pt] {$c_1$};
    \draw[color=black] (5,0) node[align=center, text height=5pt,   below=1.2pt] {$c_2$};
    \foreach \x in {0,1,2,3,4,5}{\draw (\x,0) node[vertex_u]{};}         
    \end{tikzpicture}

    \end{figure}    
    
    \vspace{-.8cm}
    \begin{align*}
    y_{a_1a_2,a_1b_2} + y_{a_1a_2,a_1c_2} + y_{a_1b_2,a_1c_2} + y_{a_2b_1,a_2c_1} &+  y_{b_2b_1,b_2c_1} \leq 4 \\
      &\forallpwd a_1,b_1,c_1,a_2,b_2,c_2  \in \nint
    \end{align*}
    \end{mdframed}

\end{document}